\theoremstyle{plain}
\newtheorem{theorem}{Theorem}[section]
\newtheorem{example}{Example}[section]
\newtheorem{lemma}{Lemma}[section]
\newtheorem{corollary}{Corollary}[section]
\newtheorem{remark}{Remark}[section]
\newtheorem{definition}{Definition}[section]
\begin{document}
	\begin{center}
	{\Large\bf Random dynamical systems generated by coalescing stochastic flows on $\mathbb{R}$}
	
\vskip10pt

G. V. Riabov

\vskip10pt

{\it Institute of Mathematics, NAS of Ukraine, Kyiv}
		\end{center}
		
	\vskip30pt

{\bf Abstract.} Existence of random dynamical systems for a class of coalescing stochastic flows on $\mathbb{R}$ is proved. A new state space for coalescing flows is built. As particular cases coalescing flows of solutions to stochastic differential equations and coalescing Harris flows are considered.

\section{Introduction}	

In the present paper we prove the existence of a random dynamical system generated by a coalescing stochastic flow on $\mathbb{R}.$ As an introductory example consider the Arratia flow -- a typical and one of the most studied coalescing stochastic flows. The Arratia flow is a family of Wiener processes $\{w_{(s,x)}(t):s\leq t, x\in \mathbb{R}\}$ (with respect to the joint filtration) that start from every time-space point of the plane $(s,x)\in \mathbb{R}\times \mathbb{R},$ $w_{(s,x)}(s)=x.$ Motion of any finite system of processes $(w_{(s,x_1)},\ldots,w_{(s,x_n)})$ up to the first meeting time 
$$
\sigma_n=\inf\{t\geq s: \ w_{(s,x_i)}(t)=w_{(s,x_j)}(t) \mbox{ for some } i\ne j \}
$$
coincides with the motion of $n$ independent Wiener processes up to the first meeting time, and at the moment of meeting processes coalesce:
$$
w_{(s_1,x_1)}(t)=w_{(s_2,x_2)}(t) \Rightarrow w_{(s_1,x_1)}(t')=w_{(s_2,x_2)}(t') \mbox{ for all } t'\geq t.
$$
The Arratia flow originated from \cite{Arratia, Arratia2} as a scaling limit in the voter model on $\mathbb{Z}$. Properties of the flow with different applications were studied in \cite{LJR, Darling, Harris, FINR, SSS, D1, TW}. In a recent paper \cite{NT} the Arratia flow was proved to be a limiting object in the Hastings-Levitov diffusion-limited aggregation model (corresponding to the rate $\alpha=0$).

We study random mappings of $\mathbb{R}$ generated by a coalescing stochastic flow. Even for the Arratia flow their structure turned out to be rather subtle. As far as we aware there still was no construction of a measurable (in all variables) dynamical version of the Arratia flow:
$$
w_{s,x}(t,\omega)=w_{t-s,x}(t,\theta_s \omega) \mbox{ for all } s\leq t, x\in \mathbb{R}, \omega\in \Omega,
$$
where $\{\theta_s,s\in\mathbb{R}\}$ is (at least) jointly measurable group of measure preserving transformations of the underlying probability space. 
Basic problem here is the discontinuity of mappings $x\to w_{(s,x)}(t)$  (see fig. 1).

\begin{figure}[!h]
 \caption{Mappings from the Arratia flow are step functions in $x$.}
 \centering
\includegraphics[width=9cm]{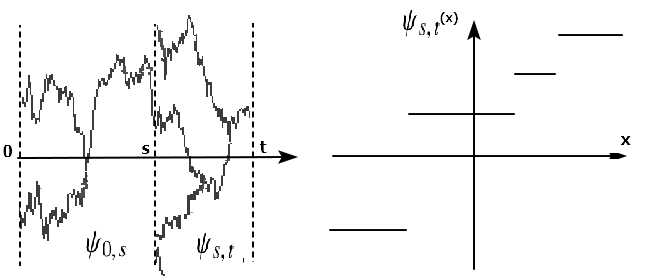}
\end{figure}

For comparison we mention the case of smooth (in $s,x,t$) stochastic flows. Such flows were extensively studied in the context of stochastic differential equations (see \cite[Ch. 4]{Kunita} and references therein). Consider a stochastic differential equation 
\begin{equation}
\label{eq28_2}
dX(t)=a(X(t))dt+b(X(t))dw(t)
\end{equation}
with bounded Lipschitz coefficients $a,b:\mathbb{R}\to\mathbb{R}.$ It is possible to define solutions to \eqref{eq28_2} simultaneously for all initial conditions $(s,x)\in\mathbb{R}\times \mathbb{R}.$ More precisely, there exists a continuous stochastic process  $\{\psi_{s,t}(x):-\infty<s\leq t<\infty,x\in \mathbb{R}\}$ such that for all $(s,x,t)$ one has
\begin{equation}
\label{eq28_1}
\psi_{s,t}(x)=x+\int^t_sa(\psi_{s,r}(x))dr+\int^t_s b(\psi_{s,r}(x))dw(r),
\end{equation}
and random mappings $x\to \psi_{s,t}(x)$ are related by the flow property:
\begin{equation}
\label{eq18_4_2}
\psi_{s,t}\circ \psi_{r,s}=\psi_{r,t}, \ r\leq s\leq t.
\end{equation}
The structure of mappings $\{\psi_{s,t}\}$ can be further specified. Let the underlying probability space be the classical Wiener space: $\Omega$ is the space $C_0(\mathbb{R},\mathbb{R})$ of all con\-ti\-nu\-o\-us functions $f:\mathbb{R}\to\mathbb{R},$ $f(0)=0,$ equipped with the metric of uniform convergence on compacts, $\mathcal{F}$ is the Borel $\sigma-$field on $C_0(\mathbb{R},\mathbb{R}),$ and $\mathbb{P}$ is the (two-sided) Wiener measure. Then the canonical process $w(f,t)=f(t)$
is the (two-sided) Wiener process. Consider a group of shifts $(\theta_h)_{h\in\mathbb{R}},$ $\theta_hf(t)=f(t+h)-f(h),$
which is a continuous group of measure-preserving transformations. By \cite[Th. 2.3.40]{Arnold} solutions to \eqref{eq28_1} can be organized in such a way that the equality
\begin{equation}
\label{eq18_4_1}
\psi_{s,t}(f,x)=\psi_{0,t-s}(\theta_s f,x)
\end{equation}
holds for all $s,x,t,f$ (with no exceptions in $f$). Thus $\varphi(t,f,x)=\psi_{0,t}(f,x)$ is a random dynamical system that represents the flow of solutions to \eqref{eq28_1}. Its existence allows to apply results from the theory of dynamical systems (e.g. ergodic theorems, theorems on attractors and stability, transformations of measures by stochastic flows) in the study of stochastic flows. A number of such applications is given in \cite{Arnold, Kunita}. A typical approach to the construction of a modification \eqref{eq18_4_1} is to build a crude cocycle (so that \eqref{eq18_4_1} holds for $\mathbb{P}-$a.a. $f\in \Omega$) of regular enough random mappings $\psi_{s,t}$ (e.g. elements of some Polish group).  Then general perfection theorems  \cite{Arnold, Arnold_Scheutzow} allow to choose a modification such that \eqref{eq18_4_1} holds without exceptions. Described approach cannot be used for stochastic flows with coalescence due to the absense of a good functional space where random mappings  $x\to\psi_{s,t}(x)$ live. To discuss possible approaches and results in the non-smooth case we give rigorous definitions of a stochastic flow and a random dynamical system.

We describe stochastic flows in terms of $n-$point motions in the manner of \cite{LJR}. Consider a sequence of transition probabilities $\{P^{(n)}:n\geq 1\}$ where $\{P^{(n)}_t:t\geq 0\}$ is a transition probability on $\mathbb{R}^n.$ Assume that transition probabilities satisfy following conditions.

{\bf TP1} (Feller condition) For each $n\geq 1$ the expression 
$$
T^{(n)}_t f(x)=\int_{\mathbb{R}^n} f(y)P^{(n)}_t(x,dy), \ x\in \mathbb{R}^n
$$
defines a Feller semigroup on $C_0(\mathbb{R}^n)$ \cite[Ch.4, \S 2]{EK}.

{\bf TP2} (consistency) Given $1\leq i_1< i_2<\ldots <i_k\leq n,$ $B_k\in \mathcal{B}(\mathbb{R}^k)$ and $C_n=\{y\in \mathbb{R}^n: (y_{i_1},\ldots,y_{i_k})\in B_k\}$  one has
$$
P^{(n)}_t(x,C_n)=P^{(k)}_t((x_{i_1},\ldots,x_{i_k}),B_k), \ t\geq 0, x\in \mathbb{R}^n.
$$

{\bf TP3} (coalescing condition) For all $x\in \mathbb{R}$ 
\begin{equation}
\label{eq24_2}
P^{(2)}_t((x,x),\Delta)=1, \ t\geq 0,
\end{equation}
where $\Delta=\{(y,y):y\in \mathbb{R}\}$ is the diagonal in the space $\mathbb{R}^2.$

{\bf TP4} (continuity of trajectories) For all $x\in\mathbb{R}$ and $\varepsilon>0$ one has
$$
t^{-1}P^{(1)}_t(x,(x-\varepsilon,x+\varepsilon)^c)\to 0, \ t\to 0.
$$
\noindent
Under this condition a transition probability $P^{(1)}$ generates a continuous Feller process on $\mathbb{R}$ \cite[Ch. 4, Prop. 2.9]{EK}.  Conditions  {\bf TP1, TP2} imply that for each $n\geq 1$ there exists a family $\{\mathbb{P}^{(n)}_x,x\in\mathbb{R}^n\}$ of probability meaures on  $C([0,\infty),\mathbb{R}^n)$ such that with respect to $\mathbb{P}^{(n)}_x$ the canonical process $X^{(n)}_t(f)=f(t),$ $f\in C([0,\infty),\mathbb{R}^n),$ is a continuous Markov process with a transitional probability $\{P^{(n)}_t:t\geq 0\}$ and a starting point $x$  \cite[Ch. 4, Th. 1.1]{EK}.

{\bf TP5} (local estimates on a meeting time) For each $c<c'$ and $t>0$ there exists a continuous increasing function $m:\mathbb{R}\to\mathbb{R}$ such that for all $x,y$
$$
\begin{aligned}
\mathbb{P}^{(2)}_{(x,y)} \Big(\forall s\in[0,t] \ \  & \big(X^{(2)}_1(s),X^{(2)}_2(s)\big)\in [c,c']^2  \\
&   \mbox{ and } X^{(2)}_1(s)\ne X^{(2)}_2(s)\Big)\leq |m(x)-m(y)|.
\end{aligned}
$$

{\bf TP6} (absence of atoms) For all $t> 0$ and $x\in\mathbb{R}$ the measure $P^{(1)}_t(x,\cdot)$ has no atoms.

By \cite[Th. 1.1]{LJR} conditions {\bf TP1-TP3} are enough for the construction of a (weak) stochastic flow of random mappings of $\mathbb{R}$ with finite-point motions defined by transition probabilities $\{P^{(n)}:n\geq 1\}$ in the sense of the following definition.

\begin{definition}\label{def29_1} \cite[Def. 1.6]{LJR} A stochastic flow of mappings of $\mathbb{R}$ generated by transition probabilities $\{P^{(n)}:n\geq 1\}$  is the family $\{\psi_{s,t}(x):-\infty<s\leq t<\infty,x\in \mathbb{R}\}$ of random variables that satisfy following properties.
	
	{\bf SF1} (regularity) For all $s\leq t,$ 
	$$
	\psi_{s,t}:\Omega\times \mathbb{R}\to \mathbb{R}
	$$
	is a measurable mapping.
	
	{\bf SF2} (weak flow/crude cocycle property) For all $r\leq s\leq t,$  $\omega\in \Omega$ and $x \in \mathbb{R}$
	$$
	\mathbb{P}(\psi_{s,t}(\psi_{r,s}(x))=\psi_{r,t}(x))=1.
	$$
	
	{\bf SF3} (independent and stationary increments) Denote by $\mathcal{F}^\psi_{s}$ the ``past'' $\sigma-$field, i.e. $	\mathcal{F}^\psi_{s}=\sigma(\{\psi_{p,q}(x):p\leq q\leq s, x\in \mathbb{R}\}).$ Then for any $s\leq t,$ $\mathcal{F}^\psi_{s}-$measurable $\mathbb{R}^n-$valued random vector  $\xi$  and a Borel set $B\in \mathcal{B}(\mathbb{R}^n)$ one has 
	$$
	\mathbb{P}\Big(\psi_{s,t}(\xi)\in B|\mathcal{F}^\psi_s\Big)=P^{(n)}_{t-s}(\xi,B) \ \mbox{a.s.}
	$$
	($\psi_{s,t}(\xi)$ is an abbreviation for $(\psi_{s,t}(\xi_1),\ldots,\psi_{s,t}(\xi_n))$).
	
\end{definition}

\begin{remark} Definition \ref{def29_1} differs from \cite[Def. 1.6]{LJR} mainly in the condition {\bf SF3}. We comment on the difference in the appendix and prove that within the definition \ref{def29_1} finite-dimensional distributions of the stochastic flow are uniquely determined by transition probabilities $\{P^{(n)}:n\geq 1\}.$
\end{remark}

Next we give a rigorous definition of what is meant by a random dynamical system.

\begin{definition}  \label{def12} \cite[Def. 1.1.1]{Arnold} A metric dynamical system is a quadruple 
	$$
	(\Omega,\mathcal{F},\mathbb{P};\{\theta_h,h\in\mathbb{R}\}),
	$$
	where $(\Omega,\mathcal{F},\mathbb{P})$ is a probability space and $\theta$ is a measurable group of measure-preserving transformations, i.e. the function $\theta:\mathbb{R}\times\Omega\to\Omega$ is measurable, $\theta_0\omega=\omega,$ $\theta_{h+s}=\theta_h\circ \theta_s$ (for all $\omega\in\Omega$ and $h,s\in\mathbb{R}$) and $\mathbb{P}\circ \theta^{-1}_h=\mathbb{P}.$ 
	
	A random dynamical system on $\mathbb{R}$ over $\theta$ is a mapping 
	$$
	\varphi:\mathbb{R}_+\times\Omega\times \mathbb{R}\to \mathbb{R}
	$$
	that satisfies two properties.
	
	{\bf RDS1} (measurability) The function $\varphi:\mathbb{R}_+\times\Omega\times \mathbb{R}\to \mathbb{R}$ 	is (jointly) measurable.
	
	{\bf RDS2} (perfect cocycle property)  For all $\omega\in \Omega,$  $s,t\geq 0$ and $x\in \mathbb{R},$ 
	$$
	\varphi(0,\omega,x)=x \mbox{ and }	\varphi(t+s,\omega,x)=\varphi(t,\theta_s\omega,\varphi(s,\omega,x)).
	$$
\end{definition}
We will say that a stochastic flow of mappings $\{\psi_{s,t}:-\infty<s\leq t<\infty\}$ is generated by a random dynamical system $\varphi,$ if for all $s\leq t,$ $\omega\in \Omega$ and $x\in \mathbb{R}$
\begin{equation}
\label{eq_rds}
\psi_{s,t}(\omega,x)=\varphi(t-s,\theta_s\omega,x).
\end{equation}

The general result of \cite{LJR} does not give the full property {\bf RDS2} -- it only gives a very crude cocycle, i.e. the equality \eqref{eq_rds} holds outside a set of measure zero (that depends on $s,t,x$). A measurable modification of the Arratia flow with {\bf SF2} holding without exceptions was built already in \cite{Arratia2}. But that construction is not consistent with time shifts, e.g. right-continuity of mappings $x\to \psi_{s,t}(x)$ is intended only for rational $s.$ Respectively, measure preserving transformations $\theta_h$ are absent in \cite{Arratia2}. Another version of the Arratia flow, which is a perfect cocycle (as a particular case of a very general setting) was built in \cite{Darling}. However, condition {\bf RDS1} was violated in that construction: an underlying probability space in \cite{Darling}  is a space of all families $(\psi_{s,t})_{s\leq t}$ of mappings satisfying \eqref{eq18_4_2} with $\theta$ being a group of shifts,
$$
(\theta_h\psi)_{s,t}=\psi_{s+h,t+h}.
$$
Such probability space is very large, e.g. it contains subspaces isomorphic to the space of all functions on $\mathbb{R}$ with a cylindrical $\sigma$-field (as any function $f:\mathbb{R}\to\mathbb{R}$ can be included as an element $f=\psi_{0,1}$ into the flow $(\psi_{s,t})_{s\leq t}$). Besides, lack of measurability results in a very complicated description of a resulting stochastic flow: conditional probability in {\bf SF3} is not well-defined for non-measurable mapping $\psi_{s,t}.$ Also it can be cheked that the shift $h\to \psi_{s+h,t+h}(x)$ is not measurable. One approach for the construction of a good (i.e. Polish) phase space for the Arratia flow was suggested in \cite{FINR}. It was proved that the Arratia flow uniquely defines a random compact in the space of compact sets of continuous trajectories. This random compact was called a Brownian Web (we refer to  \cite{SSS, BGS} for a detailed account on developments related to this very interesting and important object). It was pointed already in \cite{FINR} that the Brownian Web cannot be viewed as a family of random mappings -- with probability 1 there are multiple trajectories in the Brownian Web that start at the same point. Thus the problem of defining random mappings remains. Another state space for (part of) the Arratia flow $\{\psi_{0,t}(x):t\geq 0, x\in\mathbb{R}\}$ was suggested in \cite{D1, D2, DO} without studying the possibility to build a perfect cocycle of random mappings. Some results on the structure of a filtration generated by the Arratia flow were obtained in \cite{Riabov}. An interesting fact was pointed already in \cite{Arratia2}: properties {\bf RDS1, RDS2} and right-continuity of mappings $x\to \psi_{s,t}(x)$ are merely inconsistent for the Arratia flow. Described results indicate that in order to build a random dynamical system for the Arratia flow an appropriate underlying probability space must be chosen. We build such probability space in Sections 2 and 3. As a result we are able to prove existence of random dynamical systems for a large class of transition probabilities that define coalescing stochastic flows. The following theorem is the main result of the paper. 

\begin{theorem}
	\label{thm1} 	Consider a sequence  of transition probabilities $\{P^{(n)}:n\geq 1\}$  that satisfy conditions {\bf TP1-TP6} above. Then for a suitable metric dynamical system $(\Omega,\mathcal{F},\mathbb{P}, \{\theta_h,h\in \mathbb{R}\})$ there exists a random dynamical system 
	$$
	\varphi: \mathbb{R}_+\times\Omega\times \mathbb{R}\to \mathbb{R}
	$$
	over $\theta$ such that
	$$
	\psi(s,t,\omega,x)=\varphi(t-s,\theta_s \omega,x)
	$$
	is a stochastic flow of mappings generated by transition probabilities $\{P^{(n)}:n\geq 1\}.$
	
\end{theorem}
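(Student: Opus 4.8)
The plan is to manufacture a Polish state space in which the individual maps $\psi_{s,t}$ live, so that the whole flow becomes a genuine stochastic process with values in that space, and then to realize it canonically as a time-shift system. For a coalescing flow on $\mathbb{R}$ trajectories cannot cross, so every realized map $x\mapsto\psi_{s,t}(x)$ is non-decreasing, and for $s<t$ it is a step function with locally finite image. I would therefore let $E$ be the space of all non-decreasing right-continuous functions $g:\mathbb{R}\to\mathbb{R}$, topologized through the Hausdorff distance between the closures of their graphs inside the two-point compactification $\overline{\mathbb{R}}\times\overline{\mathbb{R}}$. The set of such ``maximal monotone graphs'' is closed in the Hausdorff topology, so $E$ is a compact, hence Polish, space containing both the identity and all coalescing step maps; the evaluation $(g,x)\mapsto g(x)$ is jointly measurable at continuity points, and composition $(g,h)\mapsto g\circ h$ is Borel and associative, so $(E,\circ)$ is a measurable monoid. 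Conditions {\bf TP5} and {\bf TP6} enter exactly here: {\bf TP5} bounds the probability that two trajectories stay in a box and remain distinct up to time $t$ by $|m(x)-m(y)|$, which forces the realized image to be locally finite and places the maps in the step-function part of $E$, while {\bf TP6} makes the one-point motion diffuse and hence guarantees that the pointwise evaluations are a.s.\ taken at continuity points.

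Next I would take the weak flow provided by {\bf TP1--TP3} through \cite[Th.~1.1]{LJR} and re-read it as an $E$-valued object. Property {\bf SF2} says $\psi_{s,t}\circ\psi_{r,s}=\psi_{r,t}$ almost surely, i.e.\ the two-parameter family is multiplicative in the monoid $(E,\circ)$, and {\bf SF3} says the increment $\psi_{s,t}$ is independent of the past $\mathcal{F}^\psi_s$ with a law depending only on $t-s$ and determined by the kernels $P^{(n)}$. Thus $\{\psi_{s,t}\}$ is a stationary multiplicative process with independent increments in $E$. Using {\bf TP5} to control the modulus of the image in the Hausdorff metric, I would show that $t\mapsto\psi_{s,t}$ admits a c\`adl\`ag $E$-valued modification, so the collection of laws of finite families $(\psi_{s_1,t_1},\dots,\psi_{s_k,t_k})$ is a consistent, shift-invariant system uniquely determined by $\{P^{(n)}\}$.

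The metric dynamical system is then built canonically. I would realize this consistent system, via Kolmogorov's theorem, on the Polish space of two-sided c\`adl\`ag $E$-valued flow trajectories $\omega=(\omega_{s,t})_{s\le t}$, then pass to the $\theta$-invariant Borel subset of full measure on which the exact relations $\omega_{0,0}=\mathrm{id}$ and $\omega_{s,t}\circ\omega_{r,s}=\omega_{r,t}$ hold for every $r\le s\le t$, and take that subset to \emph{be} $(\Omega,\mathcal{F},\mathbb{P})$. With the time-shift $(\theta_h\omega)_{s,t}=\omega_{s+h,t+h}$, stationarity of the increments gives $\mathbb{P}\circ\theta_h^{-1}=\mathbb{P}$, the group law $\theta_{h+s}=\theta_h\circ\theta_s$ holds for every $\omega$ by definition, and joint measurability of $(h,\omega)\mapsto\theta_h\omega$ follows from the c\`adl\`ag path structure, so $(\Omega,\mathcal{F},\mathbb{P},\{\theta_h\})$ is a metric dynamical system in the sense of Definition \ref{def12}. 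Setting $\varphi(t,\omega,x)=\omega_{0,t}(x)$, joint measurability of $\varphi$ ({\bf RDS1}) comes from measurability of evaluation on $E$ together with the path structure of $\Omega$, and the cocycle identity $\varphi(t+s,\omega,x)=\varphi(t,\theta_s\omega,\varphi(s,\omega,x))$ ({\bf RDS2}) is now a verbatim restatement of $\omega_{0,t+s}=\omega_{s,t+s}\circ\omega_{0,s}$, which holds for every $\omega\in\Omega$ and every $x$ by the defining property of the space.

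Finally, putting $\psi(s,t,\omega,x)=\varphi(t-s,\theta_s\omega,x)=\omega_{s,t}(x)$, the finite-dimensional distributions of $\psi$ coincide with those of the flow of \cite[Th.~1.1]{LJR} by construction, {\bf SF1--SF3} follow, and the uniqueness of finite-dimensional distributions recorded in the Remark after Definition \ref{def29_1} identifies $\psi$ as the stochastic flow generated by $\{P^{(n)}\}$. The principal obstacle, and where I expect the real work to concentrate, is achieving {\bf RDS1} together with a \emph{perfect} (every-$\omega$) {\bf RDS2} while staying inside the Polish category: the Darling-type space of all flows is too large and destroys measurability, whereas the Brownian-web space of \cite{FINR} discards the single-valued maps. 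The two delicate points are therefore (i) proving that composition is Borel on the graph-topologized monoid $E$ and compatible with the c\`adl\`ag time structure, and (ii) carrying out the passage to the invariant full-measure subset that upgrades the almost-sure relation {\bf SF2} to the pointwise cocycle {\bf RDS2} without enlarging $\Omega$ beyond a Polish space; this is precisely the inconsistency between right-continuity in all of $s$ and the perfect cocycle noted in \cite{Arratia2}, which the choice of state space is designed to resolve.
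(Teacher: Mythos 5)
Your plan founders on exactly the obstruction the paper flags from \cite{Arratia2}: for a coalescing flow such as the Arratia flow, joint measurability, the \emph{perfect} cocycle identity, and right-continuity of \emph{every} map $x\mapsto\psi_{s,t}(x)$ are mutually inconsistent. Your state space $E$ consists of everywhere right-continuous non-decreasing functions, so the ``$\theta$-invariant Borel subset of full measure on which $\omega_{s,t}\circ\omega_{r,s}=\omega_{r,t}$ holds for every $r\le s\le t$'' does not exist: if $x=\psi_{r,s}(y)$ lies in the range of the flow at time $s$, the cocycle forces $\psi_{s,t}(x)=\psi_{r,t}(y)$, and with positive probability this is the \emph{left} limit of the step function $z\mapsto\psi_{s,t}(z)$ at $x$ rather than the right limit (this happens whenever the continuing trajectory through $(s,x)$ sits at the left edge of its coalescence cluster as seen from time $s$). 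Almost surely such points occur for some triple $r<s<t$, so the exceptional set is not null and cannot be discarded. Two further steps are also not routine as stated: the family of composition constraints is uncountable and the set where they all hold must be shown measurable before one can even ask about its measure; and evaluation $(g,x)\mapsto g(x)$ on your graph-topologized $E$ is only controlled at continuity points, whereas {\bf RDS1}--{\bf RDS2} require a single-valued, jointly measurable $\varphi(t,\omega,x)$ for \emph{all} $x$, including jump points.

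The paper resolves precisely this tension by abandoning the Polish category and weakening right-continuity: the space $\mathbb{F}$ carries only the cylindrical $\sigma$-field, its elements are required to be right-continuous only at ``fresh'' points $x\notin\mathcal{R}_s(f)$ (conditions {\bf F2}--{\bf F4}), and the key technical input is Lemma \ref{lem1}, which shows that density of the range plus the fresh-point conditions already yield joint measurability of $(s,t,x,f)\mapsto f(s,x;t)$ without any topology on $\mathbb{F}$. The perfect cocycle and the shift group are then trivial by construction on $\mathbb{F}$, and all the probabilistic work goes into producing a single random element of $\mathbb{F}$ (via a countable skeleton, the estimates {\bf TP5}--{\bf TP6}, and lower envelopes) whose law is shift-invariant. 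If you want to salvage your route, you would have to replace $E$ by a space of maps whose continuity behaviour is allowed to depend on the flow's own history at each time, which is in effect what $\mathbb{F}$ does.
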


The approach we take is inspired by \cite{Darling, FINR, D2}. The construction is realized on a new space of coalescing stochastic flows $(\mathbb{F},\mathcal{A})$. Its measurable structure together with a measurable group $\theta$ (which is a group of shifts) and a perfect cocycle $\varphi$ are described in section 2. In section 3 starting from a sequence of transition probabilities $\{P^{(n)}:n\geq 1\}$ that satisfy {\bf TP1-TP6} we build a probability measure $\mu$ on $(\mathbb{F},\mathcal{A})$ and finish the proof of existence of a random dynamical system. Finally, in section 4 our construction is applied to prove existence of random dynamical systems for two types of stochastic flows: coalescing stochastic flows of solutions of an SDE \eqref{eq28_2} that move independently before the meeting time and a class of coalescing Harris flows \cite{Harris}. These examples cover the case of the Arratia flow as well.

\section{Measurable Space of Flows}
Let $C_x([s,\infty))$ be the space of all continuous functions $g:[s,\infty)\to\mathbb{R},$ $g(0)=x.$ Respectively, $\prod_{(s,x)}C_x([s,\infty))$ is the set of all families $f=(f(s,x;\cdot))_{(s,x)\in \mathbb{R}^2}$ such that $t\to f(s,x;t)$ is a continuous function on $[s,\infty),$ $f(s)=x.$ In the following definition the probability space that will carry a random dynamical system is described. 

\begin{definition}\label{def2} The space of flows $\mathbb{F}$ is the set of all families $f\in \prod_{(s,x)}C_x([s,\infty))$  that satisfy following conditions.

	{\bf F1} For all $r\leq s\leq t,$ $x\in\mathbb{R}$
	$$
	f(s,f(r,x;s);t)=f(r,x;t);
	$$
	
	{\bf F2} For all $s\in\mathbb{R}$ the set $\mathcal{R}_s(f)=\{f(r,x;s):r<s,x\in\mathbb{R}\}$  is dense in $\mathbb{R}.$
	
	{\bf F3} For all $s\leq t$ the mapping
	$$
	x\to f(s,x;t)
	$$
	is right-continuous at each point $x\not\in\mathcal{R}_s(f).$
	
	{\bf F4} For all $s<t$ and $x\in\mathbb{R}$ there exist $r<t$ and $y\not\in\mathcal{R}_r(f)$ such that
	$$
	f(s,x;t)=f(r,y;t).
	$$

\end{definition}

We will refer to the set $\mathcal{R}_s(f)$ as to the range of the flow at time $s.$ Points  $x\not\in\mathcal{R}_s(f)$ will be called  ``fresh'' points at time $s.$  So, the range of the flow must be dense at each moment of time. On the other hand there are sufficiently many fresh points -- the condition {\bf F4} means that each trajectory of the flow immediately coalesces with a trajectory started from a fresh point.  For example, the family $f(s,x;t)=x$ does not belong to $\mathbb{F},$ because it has no fresh points.  Right-continuity is intended only at fresh points. The choice ``right-'' is made to achieve needed measurability properties (see lemma \ref{lem1} below) and it can be changed e.g. to ``left-'' without affecting the results. Let us give an example of the flow $f\in \mathbb{F}.$

\begin{figure}[!h]
	\centering
	\caption{Trajectories from the flow $f$ (example \ref{ex10_1}) for $-1\leq x< 2$.}
	\includegraphics[width=8cm]{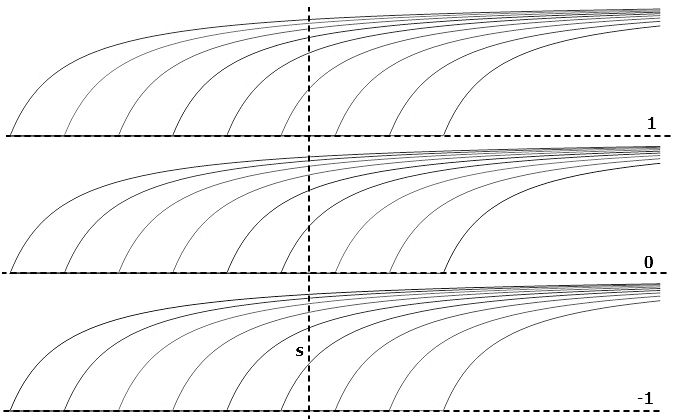}
\end{figure}

\begin{example}\label{ex10_1}
	Consider any homeomorphism  $g:[0,\infty)\to [0,1).$ Then the family
	$$
	f(s,x;t)=[x]+g(t-s+g^{-1}(x-[x]))
	$$
	belongs to $\mathbb{F}$ (fig. 2). Indeed, at every moment of time $s$ the range is $\mathbb{R}\setminus \mathbb{Z}.$ Respectively, fresh points are integers and every trajectory starts from an integer point at a certain moment of time. Right-continuity holds for all points.

\end{example}

As a corollary of the flow property {\bf F1} and continuity we have monotonicity:

{\bf F5} for all $s\leq t$
$$
x\leq y\Rightarrow f(s,x;t)\leq f(s,y;t).
$$
We equip $\mathbb{F}$ with a cylindrical $\sigma-$field $\mathcal{A},$ i.e. the smallest $\sigma-$field that makes all mappings $f\to f(s,x;t)$ measurable. The built space combines two features. Firstly, it is invariant under the shift $(\theta_h f)(s,x;t)=f(s+h,x;t+h).$ Secondly, the evaluation mapping on $\mathbb{F}$ possesses good measurability properties that are stated in the lemma \ref{lem1}. Denote by $H$ the half space $\{(s,t)\in\mathbb{R}^2:s\leq t\}.$ It is equipped with the Borel $\sigma-$field $\mathcal{B}(H).$

\begin{lemma}
	\label{lem1} The mapping $(s,t,x,f)\to f(s,x;t)$
	is $\mathcal{B}(H)\times\mathcal{B}(\mathbb{R})\times \mathcal{A}/\mathcal{B}(\mathbb{R})$--measurable.
	
\end{lemma}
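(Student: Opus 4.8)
The plan is to prove joint measurability by treating the three parameters $s,t,x$ separately, matching each with the form of regularity it enjoys: the end time $t$ enters continuously (the trajectories $t\mapsto f(s,x;t)$ are continuous by definition of $\mathbb F$), the space variable $x$ enters monotonically by \textbf{F5} and is right-continuous at fresh points by \textbf{F3}, while the start time $s$ will be controlled through the cocycle identity \textbf{F1}. As a first reduction, it suffices to show that for each fixed $t$ the section $(s,x,f)\mapsto f(s,x;t)$ is jointly measurable on $\{s\le t\}\times\mathbb R\times\mathbb F$. Indeed, putting $t_n=2^{-n}\lceil 2^n t\rceil$ one has $t_n\downarrow t$ and $t_n\ge t\ge s$, so $f(s,x;t)=\lim_n f(s,x;t_n)$ by continuity in the end time; since $t\mapsto t_n$ is a Borel step function and on each level set $\{t_n=q\}$ the map reduces to the already-measurable section at the rational end time $q$, each approximant is jointly measurable, and so is the limit.

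For the space variable at a fixed pair $(s,t)$, I would pass to the right-continuous regularization. Using that $f\mapsto f(s,q;t)$ is $\mathcal A$-measurable for every $q$ and that by \textbf{F5} the function $x\mapsto f(s,x;t)$ is nondecreasing, the right limit $R(s,x;t,f):=\lim_{n} f\bigl(s,\,2^{-n}\lceil 2^n x\rceil+2^{-n};\,t\bigr)$ exists and is jointly measurable in $(x,f)$ (it is a pointwise limit of countable combinations of measurable evaluations at dyadic space points). A monotone function has at most countably many discontinuities, so $R(s,\cdot;t,f)=f(s,\cdot;t)$ at every continuity point; and by \textbf{F3} the two also agree at every fresh point $x\notin\mathcal R_s(f)$. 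Hence $R$ and the true evaluation differ only on the set of jump points that additionally lie in the range $\mathcal R_s(f)$.

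The two remaining difficulties — the values at these non-fresh jump points and the dependence on the start time $s$ — are exactly what \textbf{F1}, \textbf{F2} and \textbf{F4} are designed to resolve, and I would treat them together. The cocycle identity \textbf{F1} gives, for any $s'$ with $s\le s'\le t$, the exact relation $f(s,x;t)=f\bigl(s',f(s,x;s');t\bigr)$; letting $s'\downarrow s$ along dyadics and using continuity of the end time to get $f(s,x;s')\to x$, this expresses the evaluation at the genuine start time $s$ as a limit of evaluations at dyadic start times, and simultaneously lets me push a range point forward to a point which, by \textbf{F4}, coincides with the forward image of a fresh point, where right-continuity and hence the right-limit representation $R$ applies. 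Density of the range \textbf{F2} guarantees that these fresh-point approximations are available at every level, and the choice of right-continuity in \textbf{F3} is precisely what makes the one-sided dyadic approximations $2^{-n}\lceil2^nx\rceil+2^{-n}\downarrow x$ and $s'\downarrow s$ converge to the correct values.

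The main obstacle is this last step: controlling the evaluation map $f(s',\cdot;t)$ \emph{uniformly as the start time $s'$ varies}, since the definition of $\mathbb F$ supplies no modulus of continuity linking trajectories with different start times. I expect the argument to hinge on showing that the double approximation — right in $x$ and right in $s$ — converges to $f(s,x;t)$ for every $f\in\mathbb F$, with \textbf{F2} and \textbf{F4} supplying the coalescence structure that forces the values at non-fresh jump points to be captured by nearby fresh trajectories; once this joint right-continuity is in place, $f(s,x;t)$ is realized as a countable pointwise limit of measurable dyadic evaluations and the measurability asserted in Lemma \ref{lem1} follows.
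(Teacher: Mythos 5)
There is a genuine gap at exactly the point you flag as ``the main obstacle,'' and the mechanism you propose to close it would not work. Your regularization $R(s,x;t,f)$ computes the \emph{right} limit of the monotone map $x\mapsto f(s,x;t)$, and you correctly observe that it can disagree with $f(s,x;t)$ at jump points $x\in\mathcal R_s(f)$. But the fix you suggest --- pushing the start time to the right, $s'\downarrow s$, and writing $f(s,x;t)=f(s',f(s,x;s');t)$ --- cannot recover the true value at such a point. First, it is circular for measurability purposes: the inner argument $f(s,x;s')$ still carries the non-rational start point $(s,x)$. Second, and more seriously, the point $f(s,x;s')$ lies in $\mathcal R_{s'}(f)$ for every $s'>s$, so {\bf F3} gives no right-continuity there either; a flow in $\mathbb F$ may contain a trajectory through $(s,x)$ (started from a fresh point in the \emph{past}, which is all {\bf F4} requires) that does not coalesce with anything above it before time $t$, in which case $\lim_{x'\downarrow f(s,x;s')}f(s',x';t)$ stays bounded away from $f(s,x;t)$ uniformly in $s'\ge s$. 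So the asserted joint right-continuity in $(s,x)$ is false for general $f\in\mathbb F$, and the representation of $f(s,x;t)$ as a limit of dyadic evaluations with start times $s'\ge s$ breaks down precisely on the exceptional set you identified.

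The paper resolves this by looking \emph{backwards} rather than forwards: it proves the single equivalence
$$
f(s,x;t)<c \iff \exists (p,u)\in\mathbb Q^2:\ p<s,\ f(p,u;s)\ge x,\ f(p,u;t)<c,
$$
which handles $s$, $x$ and $t$ simultaneously, since for a fixed rational start $(p,u)$ the maps $(s,f)\mapsto f(p,u;s)$ and $(t,f)\mapsto f(p,u;t)$ are jointly measurable by continuity in the end time. The necessity direction is where {\bf F2}--{\bf F4} are really used: if no trajectory passes strictly above $x$ at time $s$ and below $c$ at time $t$, then {\bf F3} forces $x\in\mathcal R_s(f)$, {\bf F4} identifies $x=f(r,y;s)$ with $y$ fresh at some $r<s$, and right-continuity \emph{at that past fresh point} $y$ produces a rational $(p,u)$ with $p\in(r,s)$, $f(p,u;s)=x$ and $f(p,u;t)<c$. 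If you want to repair your argument, replace the $s'\downarrow s$ approximation by this approximation from the past; as it stands, the proof is incomplete.
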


\begin{proof} For fixed $(s,x)$ the mapping $(t,f)\to f(s,x;t)$ is measurable due to continuity in $t$ \cite[Ch. II, L. (73.10)]{RW}. Thus problems may occur because of variables $(s,x)$ that define a starting point of $f.$ The following relation allows to take $(s,x)$ out of the starting point and thus it is enough for the proof of measurability.
	
	\begin{equation}
	\label{eq_5_4}
	f(s,x;t)<c \Leftrightarrow \exists (p,u)\in\mathbb{Q}^2: \ p<s, f(p,u;s)\geq x, f(p,u;t)<c.
	\end{equation}
	The sufficiency immediately follows from the flow property {\bf F1} and monotonicity condition {\bf F5}: if $f(p,u;s)\geq x, f(p,u;t)<c,$ then
	$$
	f(s,x;t)\leq f(s,f(p,u;s);t)=f(p,u;t)<c.
	$$
	
	To prove necessity, consider two possibilities. Case 1:
	$$
	\exists (r,y): r\leq s, f(r,y;s)>x, f(r,y;t)<c.
	$$
	By the density property {\bf F2} there are at least two points of the range $\mathcal{R}_s(f)$ in the interval $(x,f(r,y;s)).$ In other words, there are points $q_1,q_2<s$ and $v_1,v_2$ such that
	$$
	x<f(q_1,v_1;s)<f(q_2,v_2;s)<f(r,y;s).
	$$
	
		\begin{figure}[!h]
			\centering
			\caption{There exists a trajectory that passes strictly above $(s,x)$ and below $(t,c)$.}
			\includegraphics[width=8cm]{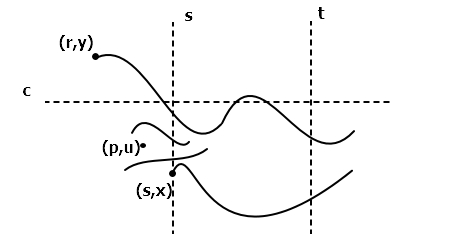}
		\end{figure}

	Consider rational numbers $p,u$ such that  $p\in (q_1\vee q_2,s)$ and $u \in (f(q_1,v_1;p),f(q_2,v_2;p)).$ By monotonicity and the flow property
	$$
	f(p,u;s)\geq f(p,f(q_1,v_1;p);s)=f(q_1,v_1;s)>x,
	$$
	$$
	f(p,u;t)\leq f(p,f(q_2,v_2;p);t)=f(q_2,v_2;t)=
	$$
	$$
	=f(s,f(q_2,v_2;s);t)\leq f(s,f(r,y;s);t)=f(r,y;t)<c.
	$$
	The point $(p,u)$ is the needed one. Case 1 is illustrated in figure 3.

	Case 2 is the complementary case:
	\begin{equation}
	\label{eq_5_3}
	r\leq s, f(r,y;s)>x \Rightarrow f(r,y;t)\geq c.
	\end{equation}
	It follows that $x$ is not a fresh point at time $s$. Indeed, if $x\not\in\mathcal{R}_s(f),$ then by the right-continuity ({\bf F3}) there exists $y>x$ such that $f(s,y;t)<c.$ Then the choice $r=s$ and $y$ contradicts \eqref{eq_5_3}. So, $x$ is not a fresh point at time $s$ and by the condition {\bf F4}
	$$
	x=f(r,y;s)
	$$
	for some $r<s$ and $y\not\in \mathcal{R}_r(f).$ 
	
	\begin{figure}[!h]
		\centering
		\caption{Point $(s,x)$ lies on a  trajectory that started from a fresh point $y$ at time $r$.}
		\includegraphics[width=8cm]{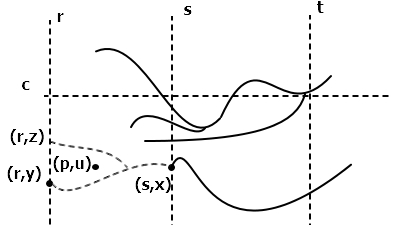}
	\end{figure}
	
	For the point $(r,y)$ we have
	$$
	f(r,y;t)=f(s,x;t)<c.
	$$
	By right-continuity ({\bf F4}) there exists a point $z>y$ such that $f(r,z;t)<c$ (see fig.  4). But $f(r,z;s)\geq x$ and condition \eqref{eq_5_3} implies that $f(r,z;s)=x.$ Choose a rational point $(p,u)\in \mathbb{Q}^2$ such that $p\in(r,s)$ and $u\in (f(r,y;p),f(r,z;p)).$ Then 
	$f(p,u;t)\leq f(r,z;t)<c,$ $f(p,u;s)=x$ and the point $(p,u)$ is the needed one. The relation \eqref{eq_5_4} and the lemma are proved.

\end{proof}

Define the group of shifts 
$$
\theta_h: \mathbb{F}\to\mathbb{F}, \  \theta_h(f)(s,x;t)=f(s+h,x;t+h),
$$
and a mapping
$$
\varphi:\mathbb{R}_+\times\mathbb{F}\times\mathbb{R}\to\mathbb{R}, \  \varphi(t,f,x)=f(0,x;t).
$$
The condition {\bf F1} of the defintion \ref{def2} implies that $\varphi$ is a perfect cocycle over $\theta.$

\begin{corollary} Mappings 
	$$
	\theta:\mathbb{R}\times\mathbb{F}\to\mathbb{F}, \  \varphi:\mathbb{R}_+\times\mathbb{F}\times\mathbb{R}\to\mathbb{R}
	$$
	are (jointly) measurable in all arguments, i.e. conditions {\bf RDS1, RDS2} of the definition \ref{def12} are satisfied.
\end{corollary}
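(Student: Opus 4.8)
The plan is to deduce both assertions from Lemma \ref{lem1}, which already establishes joint measurability of the full evaluation map $E:(s,t,x,f)\mapsto f(s,x;t)$ on $\mathcal{B}(H)\times\mathcal{B}(\mathbb{R})\times\mathcal{A}$. Everything then reduces to composing $E$ with elementary measurable reparametrizations and to recalling that a map into $(\mathbb{F},\mathcal{A})$ is measurable precisely when all of its coordinate evaluations $f\mapsto f(s,x;t)$ are, $\mathcal{A}$ being the cylindrical $\sigma$-field generated by these.

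For $\varphi$ I would first note that $\varphi(t,f,x)=f(0,x;t)=E(0,t,x,f)$. Since $t\geq 0$, the map $\iota:(t,f,x)\mapsto(0,t,x,f)$ sends $\mathbb{R}_+\times\mathbb{F}\times\mathbb{R}$ into $H\times\mathbb{R}\times\mathbb{F}$ and is measurable coordinatewise (each coordinate is a constant, a projection, or the identity). Hence $\varphi=E\circ\iota$ is jointly measurable, which is {\bf RDS1} for $\varphi$.

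For $\theta$ the target is $(\mathbb{F},\mathcal{A})$, so by definition of the cylindrical $\sigma$-field it suffices to check that for every fixed triple $(s,x,t)$ with $s\leq t$ the scalar map $(h,f)\mapsto(\theta_hf)(s,x;t)=f(s+h,x;t+h)$ is $\mathcal{B}(\mathbb{R})\times\mathcal{A}$-measurable. This is again a composition $E\circ\Phi_{s,x,t}$ with $\Phi_{s,x,t}(h,f)=(s+h,t+h,x,f)$; the crucial point is that $\Phi_{s,x,t}$ really lands in $H$, because $s\leq t$ forces $s+h\leq t+h$, and its coordinates are affine in $h$, constant, or projections, so it is measurable. (Implicit here is the shift-invariance $\theta_h(\mathbb{F})\subseteq\mathbb{F}$ noted above, which is what makes $\theta$ well defined into $\mathbb{F}$.) This yields {\bf RDS1} for $\theta$.

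Finally {\bf RDS2} is the perfect cocycle identity, which holds pointwise on all of $\mathbb{F}$ and is purely algebraic. From the normalization $f(s)=x$ I get $\varphi(0,f,x)=f(0,x;0)=x$, and applying {\bf F1} with $r=0$, intermediate time $s$, and final time $t+s$ gives $f(s,f(0,x;s);t+s)=f(0,x;t+s)$, i.e. $\varphi(t,\theta_sf,\varphi(s,f,x))=\varphi(t+s,f,x)$ for all $f\in\mathbb{F}$, $s,t\geq 0$, $x\in\mathbb{R}$. Since no measurability enters this last step, the only genuine work sits in the first two paragraphs, and there the real content is carried entirely by Lemma \ref{lem1}; the remaining verifications (measurability of the reparametrizations, the reduction to coordinate maps for the cylindrical $\sigma$-field) are routine, the one thing to watch being that each reparametrization preserves the constraint $s\leq t$ so that $E$ may legitimately be applied.
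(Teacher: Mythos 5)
Your proposal is correct and follows exactly the route the paper intends: the paper offers no written proof of this corollary beyond the remark that \textbf{F1} gives the perfect cocycle property, and your argument---joint measurability of $\varphi$ and of the coordinate maps of $\theta$ by composing the evaluation map of Lemma \ref{lem1} with measurable reparametrizations that respect the constraint $s\leq t$, plus the algebraic verification of \textbf{RDS2} from \textbf{F1} and shift-invariance of $\mathbb{F}$---is precisely the intended one, spelled out in more detail than the paper provides.
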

The probability measure $\mu$ on the space $(\mathbb{F},\mathcal{A})$ that transforms $(\mathbb{F},\mathcal{A},\mu;\{\theta_h,h\in\mathbb{R}\}$ into a metric dynamical system and $\varphi$ into a random dynamical system is constructed in the next section.

\section{Random dynamical system}
A probability measure $\mu$ on the space $(\mathbb{F},\mathcal{A})$ will be constructed from a sequence of transition probabilities $\{P^{(n)}:n\geq 1\}$ that satisfy conditions {\bf TP1-TP6} (see the Introduction and formulation of the theorem \ref{thm1}). Our strategy is to build a flow $\{\psi_{s,t}:s\leq t\}$ generated by $\{P^{(n)}:n\geq 1\}$ in such a way that $\psi\in \mathbb{F}$ for all realizations. In such a way we will construct a random element $\psi$ in  $(\mathbb{F},\mathcal{A}).$  The measure $\mu$ will be defined as a distribution of $\psi.$

To build a stochastic flow $\psi$ we use a common  approach \cite{FINR, TW} of defining a partial stochastic flow starting only from a dense set of points of $\mathbb{R}^2$ -- the so-called skeleton of the flow. When defining a skeleton we must simultaneously build processes that start at different moments of time. For convenience we will define processes for all moments of time by simply make them constants before their real moments of start. 

\begin{lemma}
	\label{lem2} On a suitable probability space $(\Omega,\mathcal{F},\mathbb{P})$ there is a family of continuous processes $\{Y_{(s,u)}:(s,u)\in\mathbb{R}^2\}$ with the following properties
	
	\begin{enumerate}
		\item For all $(s,u)\in \mathbb{R}^2$ and $t\leq s$
		$$
		\mathbb{P}(Y_{(s,u)}(t)=u)=1.
		$$
		
		\item Denote by $\mathcal{F}^Y_s$ the ``past'' $\sigma-$field at a moment $s,$ $		\mathcal{F}^Y_s=\sigma(\{Y_{(r,u)}(t):(r,u)\in\mathbb{R}^2, t\leq s\}).$ 		Then for all $(s_1,u_1),\ldots,(s_n,u_n)\in\mathbb{R}^2,$ $s\geq s_1\vee \ldots \vee s_n,$ $t\geq s$ and $B\in\mathcal{B}(\mathbb{R}^n)$ one has 
		
	\end{enumerate}
	$$
	\mathbb{P}  \Big((Y_{(s_1,u_1)}(t),\ldots,Y_{(s_n,u_n)}(t))\in B|\mathcal{F}^Y_s\Big) 
	=P^{(n)}_{t-s}\Big((Y_{(s_1,u_1)}(s),\ldots,Y_{(s_n,u_n)}(s)),B\Big).
	$$

\end{lemma}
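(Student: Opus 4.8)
The plan is to write down the finite-dimensional distributions of the family $\{Y_{(s,u)}\}$ explicitly, verify the Kolmogorov consistency relations, apply the Kolmogorov extension theorem, and then pass to continuous modifications. The governing picture is that a particle indexed by $(s,u)$ stays frozen at $u$ until time $s$ and is then released into the dynamics; once several particles have been released they move jointly as the $n$-point motion with transition $P^{(n)}$, the consistency {\bf TP2} being exactly what makes the release of one particle compatible with the motion of those already moving.

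To set up the finite-dimensional laws, fix indices $(s_1,u_1),\ldots,(s_n,u_n)$ with $s_1\le\ldots\le s_n$ and prescribe the joint law of the path $t\mapsto(Y_{(s_1,u_1)}(t),\ldots,Y_{(s_n,u_n)}(t))$ by the following recipe: on each interval $[s_k,s_{k+1})$ the first $k$ coordinates evolve under the $k$-point transition $P^{(k)}$ while the coordinates $k+1,\ldots,n$ remain at $u_{k+1},\ldots,u_n$; at the instant $s_{k+1}$ the coordinate $k+1$ is released from $u_{k+1}$ and the $k+1$ active coordinates proceed under $P^{(k+1)}$. The only point to check for this recipe to define a genuine measure $\mu_F$ attached to the \emph{set} $F=\{(s_i,u_i)\}$ is consistency under relabeling and under dropping a particle, and this is precisely {\bf TP2}: the marginal of a $(k+1)$-point motion on its first $k$ coordinates is the $k$-point motion, so releasing a further particle does not disturb the forward law of those already moving, while marginalizing any index out of $F$ reproduces the lower-dimensional recipe. (For particles released simultaneously one uses the exchangeability of the $P^{(n)}$, which is inherent to $n$-point motions of a flow of mappings.) Hence the family $\{\mu_F\}$ is consistent and the Kolmogorov extension theorem yields a probability space $(\Omega,\mathcal{F},\mathbb{P})$ carrying random variables $Y_{(s,u)}(t)$ with these joint laws.

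At this stage the processes are only defined coordinatewise in $t$. For fixed $(s,u)$ the law of $t\mapsto Y_{(s,u)}(t)$ is that of the constant $u$ for $t\le s$ followed, for $t\ge s$, by the one-point Feller process started from $u$, which by {\bf TP4} and \cite[Ch.~4, Prop.~2.9]{EK} admits a continuous version; replacing each $Y_{(s,u)}$ by such a continuous modification alters it only on a $\mathbb{P}$-null set at each fixed time, so all finite-dimensional distributions at fixed times are preserved. Property~1, namely $Y_{(s,u)}(t)=u$ a.s.\ for $t\le s$, then holds by construction.

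The main obstacle is property~2, the Markov property with the prescribed time-homogeneous transition, because $\mathcal{F}^Y_s$ is generated by uncountably many particles observed over the whole past. I would reduce to a finite conditioning by a monotone class argument: since $P^{(n)}_{t-s}\big((Y_{(s_i,u_i)}(s))_i,B\big)$ is $\mathcal{F}^Y_s$-measurable, it suffices to verify the defining identity $\mathbb{E}\big[\mathbf{1}_A\,\mathbf{1}_{(Y_{(s_i,u_i)}(t))_i\in B}\big]=\mathbb{E}\big[\mathbf{1}_A\,P^{(n)}_{t-s}((Y_{(s_i,u_i)}(s))_i,B)\big]$ for $A$ ranging over the $\pi$-system of cylinder events $\{(Y_{(r_j,v_j)}(t_j))_j\in C\}$ with $t_j\le s$. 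Augmenting the $n$ distinguished indices by the conditioning indices $(r_j,v_j)$, the enlarged finite collection is, by the construction above, a Markov process in $t$; its Markov property at time $s$ lets me replace conditioning on the values at times $t_j\le s$ by conditioning on the state at time $s$, and applying {\bf TP2} across each subsequent release on $[s,t]$ identifies the forward transition of the $n$ distinguished coordinates as $P^{(n)}_{t-s}$ evaluated at $(Y_{(s_i,u_i)}(s))_i$, independently of the other coordinates. Letting the finite conditioning families increase to $\mathcal{F}^Y_s$ completes the argument; time-homogeneity is built in since each $P^{(n)}$ is a time-homogeneous semigroup.
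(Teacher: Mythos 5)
Your proposal is correct and follows essentially the same route as the paper: both construct a consistent family of laws for finite collections of particles by freezing each particle at $u$ until its release time $s$ and then letting the released particles move under the $n$-point semigroup, with {\bf TP2} supplying the Kolmogorov consistency, followed by an application of Kolmogorov's extension theorem. The only notable difference is bookkeeping: the paper applies the extension theorem to $\mathcal{C}$-valued coordinates indexed by $\mathbb{R}^2$ (building the measures $\nu_{(s_1,u_1),\ldots,(s_n,u_n)}$ on path space by induction on the number of particles, so continuity is automatic), whereas you extend finite-dimensional distributions in time and then pass to continuous modifications via {\bf TP4}; your explicit monotone-class reduction of the conditioning on $\mathcal{F}^Y_s$ to finite cylinder events is a step the paper leaves implicit.
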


\begin{proof} Existence of the needed family of processes will be derived from the Kolmogorov's theorem \cite[Ch. II, Th. (31.1)]{RW} applied to the space $\mathcal{C}$ of all continuous functions $f:\mathbb{R}\to \mathbb{R}$  with the metric of uniform convergence on compacts and Borel $\sigma-$field. Denote by $\{Y^{(n)}(t):t\in\mathbb{R}\}$ the canonical process on $\mathcal{C}^n$
	and by $\{\mathcal{B}^{(n)}_t:t\geq 0\}$ the natural filtration on $\mathcal{C}^{(n)},$
	$$
	\mathcal{B}^{(n)}_t=\sigma(\{Y^{(n)}(s):s\leq t\}).
	$$
	For the proof it is enough to build a consistent family of probability measures
	$$
	\{\nu_{(s_1,u_1),\ldots,(s_n,u_n)}:n\geq 1,\{(s_1,u_1),\ldots,(s_n,u_n)\}\subset \mathbb{R}^2 \}
	$$
	indexed by all finite subsets of $\mathbb{R}^2,$ such that each measure $\nu_{(s_1,u_1),\ldots,(s_n,u_n)}$  satisfies two properties.
	
	\begin{enumerate}
		\item For every $i\in \{1,\ldots,n\}$	and $t\leq s_i$
		$$
		\nu_{(s_1,u_1),\ldots,(s_n,u_n)}(Y^{(n)}_i(t)=u_i)=1.
		$$
		\item For all $1\leq i_1<\ldots<i_k\leq n,$ $s\geq s_{i_1}\vee \ldots\vee s_{i_k},$ $t\geq s$ and $B\in\mathcal{B}(\mathbb{R}^k)$ one has
	\end{enumerate}
	
	$$
	\mathbb{P}  \Big((Y^{(n)}_{i_1}(t),\ldots,Y^{(n)}_{i_k}(t))\in B|\mathcal{B}^{(n)}_s\Big) 
	=P^{(k)}_{t-s}\Big((Y^{(n)}_{i_1}(s),\ldots,Y^{(n)}_{i_k}(s)),B\Big).
	$$

	\begin{remark}
		\label{rem30_1} Computations given in the appendix  show that properties 1 and 2  uniquely determine the probability measure $\nu_{(s_1,u_1),\ldots,(s_n,u_n)}.$
	\end{remark}
	
	Measures $\nu_{(s_1,u_1),\ldots,(s_n,u_n)}$ will be built by induction. Recall that $\{\mathbb{P}^{(n)}_x,x\in\mathbb{R}^n\}$ are families of distributions that correspond to continuous Markov processes with transition probabilities $\{P^{(n)}_t,t\geq 0\}.$ For each pair $(s,u)\in \mathbb{R}^2$ denote by $\nu_{(s,u)}$ the image of the measure $\mathbb{P}^{(1)}_u$ on $C([0,\infty),\mathbb{R})$ under  the mapping
	$$
	T^{(1)}_{s}:C([0,\infty),\mathbb{R})\to \mathcal{C}, \ T^{(1)}_{s}(f)(t)=f(t\vee s-s),
	$$
	$$
	\nu_{(s,u)}=\mathbb{P}^{(1)}_u \circ (T^{(1)}_{s})^{-1}.
	$$
	The measure $\nu_{(s,u)}$ obviously satisfies properties 1 and 2.
	Assume that a probability measure $\nu_{(s_1,u_1),\ldots,(s_{n-1},u_{n-1})}$ is constructed for each $(n-1)-$tuple of points $(s_1,u_1),$ $\ldots,$ $(s_{n-1},u_{n-1})$ and satisfies properties 1 and 2. Consider an $n-$tuple $(s_1,u_1),\ldots,(s_{n},u_{n})\in \mathbb{R}^2$ and let $m$ be the index of maximal  $s_i$
	$$
	m\in \arg\max_{1\leq i\leq n}s_i.
	$$
	Then define 
	$$
	(r_i,v_i)=\begin{cases}
	(s_i,u_i), \ 1\leq i\leq m-1 \\
	(s_{i+1},u_{i+1}), \ m\leq i\leq n-1
	\end{cases}
	$$
	so that the measure $\nu_{(r_1,v_1),\ldots,(r_{n-1},v_{n-1})}$ is already defined. Consider mappings
	$$
	T^{(n)}_r:\mathcal{C}^{n-1}\times C([0,\infty),\mathbb{R}^n)\to \mathcal{C}^n,
	$$
	defined for $t\leq r$ by
	$$
	T^{(n)}_r(f_1,\ldots,f_{n-1},g_1,\ldots,g_n)(t)=
	$$
	$$
	=(f_1(t),\ldots,f_{m-1}(t),g_m(0),f_{m}(t),\ldots,f_{n-1}(t))
	$$
	and for $t> r$ by
	$$
	T^{(n)}_r(f_1,\ldots,f_{n-1},g_1,\ldots,g_n)(t)=
	$$
	$$
	=(g_1(t-r)-g_1(0)+f_1(0),\ldots,g_{m-1}(t-r)-g_{m-1}(0)+f_{m-1}(0),g_m(t-r),
	$$
	$$
	g_{m+1}(t-r)-g_{m+1}(0)+f_{m}(0),\ldots,g_{n}(t-r)-g_{n}(0)+f_{n-1}(0)).
	$$
	Further, on the space $\mathcal{C}^{n-1}\times C([0,\infty),\mathbb{R}^n)$ we take the mixture of measures
	$$
	\alpha(df,dg)=\nu_{(r_1,v_1),\ldots,(r_{n-1},v_{n-1})}(df)\mathbb{P}^{(n)}_{(f_1(s_m),\ldots,f_{m-1}(s_m),u_m,f_{m}(s_m),\ldots,f_{n-1}(s_m))}(dg)
	$$
	and define $\nu_{(s_1,u_1),\ldots,(s_{n},u_{n})}$ as the image of the measure $\alpha$ under the mapping $T^{(n)}_{s_m},$
	$$
	\nu_{(s_1,u_1),\ldots,(s_{n},u_{n})}=\alpha\circ (T^{(n)}_{s_m})^{-1}.
	$$
	Properties 1 and 2 for the measure $\nu_{(s_1,u_1),\ldots,(s_n,u_n)}$ follow from the Markov property of measures $\mathbb{P}^{(n)}$ and consistency condition {\bf TP2}.
	
\end{proof}

Built processes $\{Y_{(s,u)}, (s,u)\in\mathbb{R}^2$ are finite-point motions from the flow that we aim to build. Let us enumerate points of $\mathbb{Q}^2$ in a sequence $\{(p_i,u_i):i\geq 1\}.$  Processes $\{Y_{(p_i,u_i)}:$ $i\geq 1\}$ will be used as as a skeleton of the stochastic flow $\psi$. In the next lemma we choose a suitable version of the skeleton that will give rise to an $\mathbb{F}-$valued modification of $\psi.$

\begin{lemma}
	\label{lem3} Let $\{Y_{(p_i,u_i)}:$ $i\geq 1\}$ be a skeleton flow constructed in the lemma \ref{lem2}. 	Following properties hold out of the set of measure zero. 
	
	{\bf SP1} For all $i,j$ such that $p_j>p_i,$ $	Y_{(p_i,u_i)}(p_j)\ne u_j.$
	
	{\bf SP2} Let $\tau_{ij}=\inf\{t\geq s_i\vee s_j: Y_{(s_i,u_i)}(t)=Y_{(s_j,u_j)}(t)\}$ be the meeting time of processes $Y_{(s_i,u_i)}$ and $Y_{(s_j,u_j)}.$ Then  for all $i\ne j$
	$$
	Y_{(s_i,u_i)}(t)=Y_{(s_j,u_j)}(t), \ t\geq \tau_{ij}.
	$$
	
	{\bf SP3} For all $s\in \mathbb{R}$ the range $\mathcal{R}_s(Y)=\{Y_{(p_i,u_i)}(s):p_i<s\}$ is dense in $\mathbb{R}.$
	
	{\bf SP4} For all $s<t$ and $a<b$ the set $	\{Y_{(p_i,u_i)}(t):p_i\leq s, Y_{(p_i,u_i)}(s)\in(a,b)\} $	is finite.
	
	{\bf SP5} For all $i\geq 1$
	$$
	\lim_{u_j\to u_i+}\max_{t\geq p_i} (Y_{(p_i,u_j)}(t)-Y_{(p_i,u_i)}(t))= 0
	$$
	
\end{lemma}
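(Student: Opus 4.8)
Since each of the five properties can fail only on an event generated by the countable skeleton $\{Y_{(p_i,u_i)}:i\ge1\}$, the plan is to prove each statement outside a null set and then take the union of the exceptional sets. Properties \textbf{SP1} and \textbf{SP2} are immediate in this spirit. For \textbf{SP1} fix $i,j$ with $p_j>p_i$: by Lemma~\ref{lem2} the law of $Y_{(p_i,u_i)}(p_j)$ is $P^{(1)}_{p_j-p_i}(u_i,\cdot)$, which has no atoms by \textbf{TP6}, so $\mathbb{P}(Y_{(p_i,u_i)}(p_j)=u_j)=0$, and a countable union over pairs finishes it. For \textbf{SP2} fix $i\ne j$: by Lemma~\ref{lem2} the pair $(Y_{(s_i,u_i)},Y_{(s_j,u_j)})$, observed after both coordinates have started, is a Markov process with transition probability $P^{(2)}$, hence strong Markov by the Feller property \textbf{TP1}. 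The meeting time $\tau_{ij}$ is a stopping time with $(Y_{(s_i,u_i)}(\tau_{ij}),Y_{(s_j,u_j)}(\tau_{ij}))\in\Delta$, so the strong Markov property together with the coalescing condition \textbf{TP3} keeps the pair on $\Delta$ for all later times; again a countable union over pairs gives \textbf{SP2}.

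Properties \textbf{SP4} and \textbf{SP5} both reduce to the single quantitative input \textbf{TP5}. For \textbf{SP4} I would first use density of the range (\textbf{SP3}) to pick skeleton trajectories lying below $a$ and above $b$ at time $s$; since trajectories cannot cross (monotonicity \textbf{F5} together with the coalescing property), these two continuous ``guards'' confine every relevant trajectory to a random bounded box on $[s,t]$. On the event that the guards remain in a fixed box $[c,c']\supset(a,b)$, any finite subfamily of relevant trajectories has distinct positions $x_1<\dots<x_k$ at time $s$ (equal positions at $s$ have already coalesced by \textbf{SP2}), and the number of distinct values at time $t$ is one plus the number of adjacent pairs not yet coalesced; by \textbf{TP5} the expected number of such pairs is at most $\sum_j|m(x_{j+1})-m(x_j)|=|m(x_k)-m(x_1)|\le|m(c')-m(c)|$. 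Monotone convergence over finite subfamilies then bounds the expected total count on this event, and letting the deterministic box grow removes the restriction, proving \textbf{SP4}. For \textbf{SP5}, monotonicity shows $g(u_j):=\max_{t\ge p_i}(Y_{(p_i,u_j)}(t)-Y_{(p_i,u_i)}(t))\ge0$ is non-increasing as $u_j\downarrow u_i$ through the rationals, so the limit $L\ge0$ exists almost surely; to see $L=0$ I would, for fixed $\varepsilon>0$ and a box $[c,c']$, split $\{g(u_j)>\varepsilon\}$ according to whether both trajectories stay in $[c,c']$ until they meet. The confined part is bounded by \textbf{TP5} through $|m(u_i)-m(u_j)|\to0$, the probability of leaving $[c,c']$ before meeting tends to $0$ as the box grows uniformly for $u_j$ near $u_i$, so $g(u_j)\to0$ in probability and monotonicity forces $L=0$.

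For \textbf{SP3} the plan is to prove that almost surely $u\in\overline{\mathcal{R}_s(Y)}$ for every $s$ and every rational $u$, which suffices since the rationals are dense. Fixing rational $u$ and rational $\varepsilon>0$, for each rational $q$ put $\tau_q=\inf\{t>q:|Y_{(q,u)}(t)-u|\ge\varepsilon\}$; path-continuity and \textbf{TP4} give $\tau_q>q$, and $s\in(q,\tau_q)$ produces a range point within $\varepsilon$ of $u$. Thus it is enough to show the open set $\bigcup_{q\in\mathbb{Q}}(q,\tau_q)$ exhausts $\mathbb{R}$. For a fixed time $s$ the estimate $\mathbb{P}(\tau_q\le s)=\mathbb{P}\big(\sup_{[q,s]}|Y_{(q,u)}-u|\ge\varepsilon\big)\to0$ as $q\uparrow s$ (a consequence of path-continuity and \textbf{TP4}) shows that $s$ is covered almost surely, so the complement is Lebesgue-null; intersecting over rational $\varepsilon\downarrow0$ and taking the union over rational $u$ would then finish \textbf{SP3}.

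The main obstacle is exactly this last step, namely upgrading the covering of almost every time $s$ to a covering of every time $s$: the uncovered set is closed and Lebesgue-null but need not be empty, and since membership in $\mathbb{F}$ demands density of the range at every instant, one cannot discard a null set of exceptional times. I expect this uniformity over the continuum of times to be the crux, and I would resolve it by establishing a right-continuity (or one-sided oscillation) property of $s\mapsto\operatorname{dist}(u,\mathcal{R}_s(Y))$, using that range points persist and move continuously as $s$ increases, so that vanishing on a dense full-measure set of times forces vanishing everywhere. By contrast \textbf{SP1} and \textbf{SP2} are routine countable-union arguments, while \textbf{SP4} and \textbf{SP5} are direct consequences of \textbf{TP5}.
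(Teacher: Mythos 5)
Your treatment of \textbf{SP1}, \textbf{SP2}, \textbf{SP4} and \textbf{SP5} follows the paper's route: \textbf{TP6} for \textbf{SP1}, the strong Markov property plus \textbf{TP3} for \textbf{SP2}, and the telescoping application of \textbf{TP5} over adjacent pairs (with two ``guard'' trajectories providing the confinement event) for \textbf{SP4}. For \textbf{SP5} note one missing ingredient: \textbf{TP5} only bounds the probability that a confined pair has \emph{not met} by time $t$; it says nothing about the size of the gap before meeting, and $g(u_j)$ is a maximum over the unbounded range $t\ge p_i$. To conclude $g(u_j)\le 2\varepsilon$ you must first localize in time, choosing $q>p_i$ so small that by path continuity both trajectories stay within $\varepsilon$ of $u_i$ on $[p_i,q]$, and then use \textbf{TP5} (via Borel--Cantelli along a subsequence $u_{j_n}\downarrow u_i$ with $\sum_n(m(u_{j_n})-m(u_i))<\infty$) to force coalescence \emph{by time $q$}; after coalescence the paths agree forever by \textbf{SP2}, which is what tames the maximum over all $t\ge p_i$. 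This is exactly how the paper argues.

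The genuine gap is in \textbf{SP3}, and it sits exactly where you located it --- but your proposed repair does not work. The function $d(s)=\operatorname{dist}(u,\mathcal{R}_s(Y))=\inf\{|Y_{(p_i,u_i)}(s)-u|:p_i<s\}$ is an infimum of continuous functions over an index set that only grows with $s$, hence it is \emph{upper} semicontinuous; an upper semicontinuous function can vanish on a dense set of full measure and still be strictly positive at exceptional times. The heuristic that ``range points persist and move continuously'' does not propagate smallness of $d$ to a limit time, because the witnessing trajectory changes with $s$ and there is no uniform modulus of continuity over the countable family of nearby trajectories; what you would need is \emph{lower} semicontinuity, and there is no reason for it to hold. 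The paper closes this gap quantitatively: it first upgrades \textbf{TP4} to a maximal inequality, $t^{-1}\mathbb{P}^{(1)}_u(\max_{r\in[0,t]}|X^{(1)}(r)-u|>\varepsilon)\to 0$ as $t\to 0$ (proved by a stopping-time argument), then for a rational rectangle $[s_1,s_2]\times[a_1,a_2]$ takes partitions $s_1=\xi_0<\dots<\xi_{k_n}=s_2$ and applies Borel--Cantelli to the events that some trajectory started at $(\xi_i,u)$ oscillates by more than $\delta$ over the \emph{overlapping} window $[\xi_i,\xi_{i+2}]$. Since every $r\in(s_1,s_2)$ lies in the interior of such a window whose left endpoint $\xi_i$ is strictly less than $r$, a single almost sure event certifies that \emph{every} time in $(s_1,s_2)$ sees a range point in $(a_1,a_2)$. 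You should replace your semicontinuity step with this (or an equivalent) simultaneous covering argument.
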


\begin{proof} The property {\bf SP1} follows from {\bf TP6}. The coalescing condition {\bf TP3} and strong Markov property of two-point motions $\mathbb{P}^{(2)}$ \cite[Ch. 4, Th. 2.7]{EK} give the property {\bf SP2}.
	
	To prove the property {\bf SP3} we  will refine the condition {\bf TP4}. For each $u\in \mathbb{R}$ and $\varepsilon>0$ one has
	\begin{equation}
	\label{eq04_1}
	t^{-1}\mathbb{P}^{(1)}_u (\max_{r\in[0,t]}|X^{(1)}(r)-u|>\varepsilon)\to 0, \ t\to 0.
	\end{equation}
	Indeed, introduce the stopping time $\sigma(f)=\inf\{r\geq 0: |f(r)-f(0)|\geq \varepsilon\}.$ Given $\delta>0$ there exists $t_0=t_0(\delta)>0$ such that for all $t\leq t_0$
	$$
	P^{(1)}_t(u,(u-\varepsilon/2,u+\varepsilon/2)^c)\leq \delta t,
	$$
	$$
	P^{(1)}_t(u+\varepsilon,(u+\varepsilon/2,u+3\varepsilon/2)^c)\leq \delta t,
	$$
	$$
	P^{(1)}_t(u-\varepsilon,(u-3\varepsilon/2,u-\varepsilon/2)^c)\leq \delta t.
	$$
	Estimate the left-hand side of \eqref{eq04_1} as follows
	$$
	\begin{aligned}
	\mathbb{P}^{(1)}_u ( & \max_{r\in[0,t]}|X^{(1)}(r)-u|>\varepsilon)  \leq   P^{(1)}_t(u,(u-\varepsilon/2,u+\varepsilon/2)^c) \\
	& 	+\mathbb{P}^{(1)}_u (|X^{(1)}(t)-u|\leq \varepsilon/2, \max_{r\in[0,t]}|X^{(1)}(r)-u|>\varepsilon)\\ 
	\leq & P^{(1)}_t(u,(u-\varepsilon/2,u+\varepsilon/2)^c) \\
	& +
	\mathbb{E}_{\mathbb{P}^{(1)}_u} P^{(1)}_{t-\sigma}(X^{(1)}(\sigma),(X^{(1)}(\sigma)-\varepsilon/2,X^{(1)}(\sigma)+\varepsilon/2)^c)\leq 2\delta t, 
	\end{aligned}
	$$
	where on the last step we used the relation $X^{(1)}(\sigma)=X^{(1)}(0)\pm\varepsilon.$ Convergence \eqref{eq04_1} is proved.
	
	Let us consider a rectangle $[s_1,s_2]\times [a_1,a_2]$ where all points $a_1,a_2,s_1,s_2$ are rational. Denote $u=\frac{a_1+a_2}{2},$ $\delta=\frac{a_2-a_1}{2}.$ For each $n\geq 1$ choose $t_0>0$ such that 
	$$
	\mathbb{P}^{(1)}_u(\max_{r\in [0,t]}|X^{(1)}(r)-u|>\delta)\leq \frac{t}{2^n}, \ t\leq t_0.
	$$
	Consider a uniform partition  $	s_1=\xi_0<\xi_1<\ldots<\xi_{k_n}=s_2,$ 	where number of segments $k_n>\frac{2(s_2-s_1)}{t_0}.$ Let  $A_n$ be the event
	$$
	A_n=\bigcup^{k_n-2}_{i=0}\{\max_{r\in [\xi_i,\xi_{i+2}]}|Y_{(\xi_i,u)}(r)-u|>\delta\}.
	$$
	The probability of $A_n$ can be estimated as
	$$
	\mathbb{P}(A_n)\leq (k_n-1)\mathbb{P}^{(1)}_u(\max_{0\leq r\leq \frac{2(s_2-s_1)}{k_n}}|X^{(1)}(r)-u|>\delta)\leq \frac{2(s_2-s_1)(k_n-1)}{k_n 2^n}.
	$$
	Hence, $	\sum^\infty_{n=2} \mathbb{P}(A_n)<\infty$ 	and with probability 1 each segment $\{r\}\times (a_1,a_2),$ $s_1<r<s_2,$ intersects the range $\mathcal{R}_r(Y)$ (fig. 5).
	
	\begin{figure}[!h]
		\centering
		\caption{Interval $r_1\times(a_1,a_2)$ is intersected by trajectories of the skeleton while interval $r_2\times(a_1,a_2)$ is not.}
		\includegraphics[width=8cm]{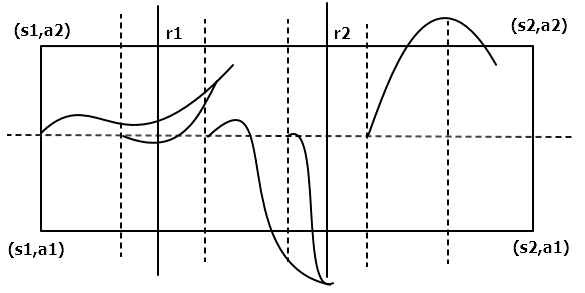}
	\end{figure} 
	
	Taking intersection in all rectangles $[s_1,s_2]\times[a_1,a_2]$ with rational vertices gives the needed result.
	
	The proof of {\bf SP4} is in lines of \cite{D2, Matsumoto}. Consider $n\geq 2,$ $a<b,$ points $a<u_1<\ldots<u_n<b$ and a continuous function $f \in C([0,\infty),\mathbb{R}^n)$ starting at $u=(u_1,\ldots,u_n).$ For  fixed time $t>0$ denote by $d_n(t,f)$ the number of points in the image $\{f_1(t),\ldots,f_n(t)\}:$ 
	$$
	d_n(t,f)=|\{f_1(t),\ldots,f_n(t)\}|.
	$$
	Expectation of $d_n(t,X^{(n)})$ with respect to $\mathbb{P}^{(n)}_{u}$ is estimated using {\bf TP5}:
	$$
	\begin{aligned}
	\mathbb{E}_{\mathbb{P}^{(n)}_{u}}& d_n(t,X^{(n)})1_{\{\forall s\in[0,t] \ (X^{(n)}_1(s),X^{(n)}_n(s))\in [c,c']^2\}} \\
	\leq & \mathbb{E}_{\mathbb{P}^{(n-1)}_{(u_1,\ldots,u_{n-1})}}d_{n-1}(t,X^{(n-1)})1_{\{\forall s\in[0,t] \ (X^{(n-1)}_1(s),X^{(n-1)}_{n-1}(s))\in [c,c']^2\}} \\
	&	+\mathbb{P}^{(2)}_{(u_{n-1},u_n)}(\forall s\in[0,t] \ (X^{(2)}_1(s),X^{(2)}_2(s))\in[c,c']^2 \mbox{ and } X^{(2)}_1(s)\ne X^{(2)}_2(s))\\
	\leq & \mathbb{E}_{\mathbb{P}^{(n-1)}_{(u_1,\ldots,u_{n-1})}}d_{n-1}(t,X^{(n-1)})1_{\{\forall s\in[0,t] \ (X^{(n-1)}_1(s),X^{(n-1)}_{n-1}(s))\in [c,c']^2\}}\\
	+& m(u_n)-m(u_{n-1})	\leq  \ldots \\
	\leq & 1+ \sum^{n-1}_{i=1}(m(u_{i+1})-m(u_i))\leq 1+m(b)-m(a),
	\end{aligned}
	$$
	where $m$ is an increasing function from the condition {\bf TP5}. $m$ does not depend on the choice of points $u_1,\ldots,u_n.$ 
	
	Now let us consider rational points $s<t$ and $a<b.$ For each $n\geq 1$ we define a random set of positions of first $n$ trajectories from the skeleton that passed through  $(a,b)$ at time $s,$
	$$
	C_n=\{Y_{(p_i,u_i)}(s):1\leq i\leq n, p_i\leq s, a<Y_{(p_i,u_i)}(s)<b\}.
	$$
	Let $k_n$ be the cardinality of $C_n,$ i.e. $	C_n=\{\pi_1,\ldots,\pi_{k_n}\}, $ 	where $\pi_1<\ldots<\pi_{k_n}.$ Denote by $\xi_1,\ldots,\xi_{k_n}$ parts of trajectories of the skeleton that passed through points $\pi_1,\ldots,\pi_{k_n},$ 
	$$
	\xi_{j}(t)=Y_{(p_i,u_i)}(s+t), \ t\geq 0,
	$$
	where $i$ is chosen in such a way that $Y_{(p_i,u_i)}(s)=\pi_j$ (definition is correct in the view of {\bf SP2}). Finally, let $\zeta_n$ be the number of points in the image $\{\xi_1(t-s),\ldots,\xi_{k_n}(t-s)\}:$ $	\zeta_n=d_{k_n}(t-s,\xi).$
	
	From above computations it follows that 
	$$
	\begin{aligned}
	\mathbb{E}&  \zeta_n 1_{\{\forall r\in[s,t] \ (Y_{(s,a)}(r),Y_{(s,b)}(r)\in [c,c']^2\}} \\
	\leq & \mathbb{E} d_{k_n}(t-s,\xi)1_{\{\forall r\in[0,t-s] \ (\xi_1 (r), \xi_{k_n}(r))\in[c,c']^2\}}\leq 1+m(b)-m(a).
	\end{aligned}
	$$
	Then for all $c<c'$
	$$
	\mathbb{E} (\sup_{n\geq 1}\zeta_n) 1_{\{\forall r\in[s,t] \ (Y_{(s,a)}(r),Y_{(s,b)}(r)\in [c,c']^2\}}\leq 1+m(b)-m(a).
	$$
	From the continuity of $Y_{(s,u)}$ events $\{\forall r\in[s,t] \ (Y_{(s,a)}(r),Y_{(s,b)}(r)\in [c,c']^2\}$ increase to $\Omega$ as $c\to-\infty,$ $c'\to \infty.$ So, with probability 1 $	\sup_{n\geq 1}\zeta_n <\infty.$ 
	To prove {\bf SP4} it remains to take union in all quadruples of rational points $s<t,$ $a<b.$
	
	Consider a point $(p_i,u_i)\in \mathbb{Q}^2,$ some $q>p_i,$ rational $v>u_i$ and $c<c'.$ Let $m$ be a continuous increasing function from the condition {\bf TP5} that corresponds to $c,c'$ and $q-p_i.$ Find a sequence ${j_n},$ such that $p_{j_n}=p_i,$  $v>u_{j_n}\downarrow u_i$ and
	$$
	\sum^\infty_{n=1} (m(u_{j_n})-m(u_i))<\infty.
	$$
	By the Borel-Cantelli lemma with probability $1$ a condition 
	$$
	\forall s\in[p_i,q] \  (Y_{(p_i,u_i)}(s),Y_{(p_i,v)}(s)\in [c,c']^2
	$$
	implies the existence of an index $n$  such that $Y_{(p_i,u_{j_n})}(q)=Y_{(p_i,u_i)}(q).$ 	Taking union in all $c,c'$ we obtain that with probability $1$ for any point $(p_i,u_i)\in \mathbb{Q}^2$ and any $q>p_i$ there exists rational $u_j>u_i,$ such that 
	$$
	Y_{(p_i,u_{j})}(q)=Y_{(p_i,u_i)}(q).
	$$
	Now  given $\varepsilon>0$ consider $k,$ such that  $u_i<u_{k}<u_i+\varepsilon.$ There exists $q>p_i,$ such that 
	$$
	u_i-\varepsilon<Y_{(p_i,u_i)}(r)\leq Y_{(p_i,u_k)}(r)<u_i+\varepsilon \mbox{ for all } r\in [p_i, q].
	$$
	As it was shown above, for some $u_{j_n}\in (u_i,u_k)$ we have $	Y_{(p_i,u_{j_n})}(q)=Y_{(p_i,u_i)}(q).$ Then for all $u_j\leq u_{j_n}$ and all $r\geq p_i$
	$$
	Y_{(p_i,u_i)}(r)\leq Y_{(p_i,u_i)}(r)\leq Y_{(p_i,u_i)}(r)+2\varepsilon.
	$$
	{\bf SP5} is checked and the lemma \ref{lem3} is proved.
	
\end{proof}

Now we can define a version of a stochastic flow generated by transition probabilities $\{P^{(n)},n\geq 1\}$ which is a random element in $\mathbb{F}:$
$$
\psi(s,x;t)=\inf\{Y_{(p_i,u_i)}(t):p_i\leq s, Y_{(p_i,u_i)}(s)\geq x\}.
$$

Actually there are two options in the definition depending on whether $(s,x)$ lies on the skeleton or not (see fig. 6).

\begin{figure}[!h]
	\centering
	\caption{A point $(r,x)$ lies on the skeleton while a point $(r,y)$ does not.}
	\includegraphics[width=8cm]{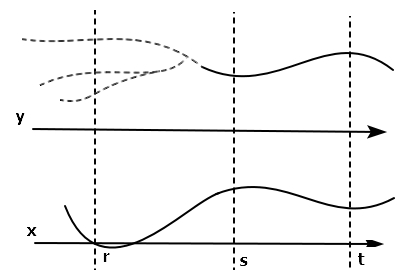}
\end{figure}

In the first case we continue the trajectory of the skeleton. In the second case we take a lower envelope of trajectories of the skeleton that passed above $x$ at time $s$. By properties {\bf SP4, SP5} of the lemma \ref{lem3} the definition is correct and gives continuous functions $t\to \psi(s,x;t)$.

The family $\psi$ satisfies all conditions of the definition \ref{def2}. Indeed, condition {\bf F1} follows from the characterization above: if $r<s<t$ then $\psi(r,x;s)=Y_{(p_i,u_i)}(s)$ 
for some $i$ with $p_i\leq r$ and $Y_{(p_i,u_i)}(r)=x.$ So,
$$
\psi(s,\psi(r,x;s);t)=Y_{(p_i,u_i)}(t)\geq \psi(r,x;t).
$$
But if $X_{(p_i,u_i)}(t)> \psi(r,x;t)$ then there exists $j$ with $p_j\leq r,$ $Y_{(p_j,u_j)}(r)\geq x$ and $Y_{(p_j,u_j)}(t)<Y_{(p_i,u_i)}(t).$ From {\bf SP2} it follows that $Y_{(p_j,u_j)}(s)<Y_{(p_i,u_i)}(s)=\psi(r,x;s)$ which is impossible. Property {\bf F2} immediately follows from {\bf SP3}.  Property {\bf F3} is the right-continuity of the mapping $x\to \psi(s,x;t)$ at a point $x\not\in \mathcal{R}_s(\psi).$ If a point $(s,x)$ does not lie on the graph of the skeleton, then the property {\bf SP4} and the definition of $\psi$ imply that $x\to \psi(s,x;t)$ is constant to the right of $x$. If a point $(s,x)$ lies on the graph of the skeleton and $x\not\in\mathcal{R}_s(\psi)$ then the only possibility is $(s,x)=(p_i,u_i)$ and right-continuity follows from {\bf SP5}. Finally, the condition {\bf F4} follows from {\bf SP1} and the definition of $\psi$: each trajectory $\psi(s,x;t)$ at time $t>s$ coincides with some $Y_{(p_i,u_i)}(t),$ and the point $u_i$ is a fresh point at time $p_i.$

Consequently, $\psi$ is a measurable mapping $\psi:\Omega\to \mathbb{F}$ and a measure $\mu=\mathbb{P}\circ \psi^{-1}$ is well-defined. The construction of a random dynamical system will be finished when we will check that $\mu\circ \theta^{-1}_h=\mu.$ This is done is the next lemma.

\begin{lemma}\label{lem4} For all $h\in \mathbb{R}$ $\mu\circ \theta^{-1}_h=\mu.$

\end{lemma}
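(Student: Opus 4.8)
The plan is to show that $\theta_h\psi$ has the same law on $(\mathbb{F},\mathcal{A})$ as $\psi$. Since $\mu=\mathbb{P}\circ\psi^{-1}$ and $\theta_h\colon(\mathbb{F},\mathcal{A})\to(\mathbb{F},\mathcal{A})$ is measurable, for any $A\in\mathcal{A}$ one has $(\mu\circ\theta_h^{-1})(A)=\mathbb{P}(\psi\in\theta_h^{-1}A)=\mathbb{P}(\theta_h\psi\in A)$, so the assertion $\mu\circ\theta_h^{-1}=\mu$ is exactly the equality of the laws of $\psi$ and $\theta_h\psi$. Because $\mathcal{A}$ is the cylindrical $\sigma$-field generated by the evaluations $f\mapsto f(s,x;t)$, the finite-dimensional cylinders form a $\pi$-system generating $\mathcal{A}$, and two probability measures agreeing on them agree on all of $\mathcal{A}$. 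Hence it suffices to prove that for every finite configuration $(s_1,x_1;t_1),\dots,(s_k,x_k;t_k)$ the vector $\big((\theta_h\psi)(s_j,x_j;t_j)\big)_{j\le k}=\big(\psi(s_j+h,x_j;t_j+h)\big)_{j\le k}$ has the same distribution as $\big(\psi(s_j,x_j;t_j)\big)_{j\le k}$; that is, $\psi$ and $\theta_h\psi$ have the same finite-dimensional distributions.

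To obtain this I would verify that $\theta_h\psi$ is again a stochastic flow of mappings generated by the same transition probabilities $\{P^{(n)}:n\ge1\}$ in the sense of Definition~\ref{def29_1}, and then invoke the uniqueness of finite-dimensional distributions recorded in the remark following Definition~\ref{def29_1}. Writing $\psi_{s,t}(x)=\psi(s,x;t)$, we have $(\theta_h\psi)_{s,t}=\psi_{s+h,\,t+h}$. Condition \textbf{SF1} for $\theta_h\psi$ is immediate from measurability of $\psi$ and of $\theta_h$, and \textbf{SF2} holds because $\mathbb{F}$ is invariant under $\theta_h$, so $\theta_h\psi\in\mathbb{F}$ and the flow identity \textbf{F1} applies. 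The substantive point is \textbf{SF3}. Here one observes that the past $\sigma$-field of the shifted flow is a time-translate of that of $\psi$,
$$
\mathcal{F}^{\theta_h\psi}_s=\sigma\big(\{\psi_{p+h,\,q+h}(x):p\le q\le s,\ x\in\mathbb{R}\}\big)=\mathcal{F}^\psi_{s+h},
$$
so that for an $\mathcal{F}^{\theta_h\psi}_s$-measurable vector $\xi$ the conditional law in \textbf{SF3} for $\theta_h\psi$ reads $\mathbb{P}\big(\psi_{s+h,\,t+h}(\xi)\in B\mid\mathcal{F}^\psi_{s+h}\big)$. Since $\psi$ itself satisfies \textbf{SF3} (at the time pair $(s+h,t+h)$) and the transition probabilities are time-homogeneous, this equals $P^{(n)}_{(t+h)-(s+h)}(\xi,B)=P^{(n)}_{t-s}(\xi,B)$, which is precisely \textbf{SF3} for $\theta_h\psi$. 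Thus both $\psi$ and $\theta_h\psi$ are stochastic flows generated by $\{P^{(n)}\}$, their finite-dimensional distributions coincide by the cited uniqueness, and the reduction of the first paragraph yields $\mu\circ\theta_h^{-1}=\mu$.

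The main obstacle is the input that $\psi$ genuinely satisfies \textbf{SF3}: the construction exhibited $\psi$ only as an $\mathbb{F}$-valued element (properties \textbf{F1}--\textbf{F4}), so the Markov/independence structure must be propagated from the skeleton $\{Y_{(p_i,u_i)}\}$ of Lemma~\ref{lem2} to the full flow indexed by all $(s,x)$. Concretely one approximates an arbitrary starting point $(s,x)$ by skeleton points lying just above it and passes the identity of Lemma~\ref{lem2} to the limit using \textbf{SP4}, \textbf{SP5}, the consistency \textbf{TP2}, and the Feller property \textbf{TP1}; this is where the absence of atoms \textbf{TP6} and the coalescing structure are really used. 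An equivalent, more hands-on route writes $\theta_h\psi$ directly as the flow built by the infimum formula from the time-shifted skeleton $Z_{(p_i-h,u_i)}(\cdot)=Y_{(p_i,u_i)}(\cdot+h)$, now indexed by the lattice $(\mathbb{Q}-h)\times\mathbb{Q}$; time-homogeneity of $\{P^{(n)}\}$ shows this shifted skeleton carries exactly the finite-dimensional distributions prescribed in Lemma~\ref{lem2} over its lattice, and the remaining step---that the law of the constructed $\mathbb{F}$-valued flow does not depend on the particular countable dense index lattice---is once again the uniqueness of finite-dimensional distributions. Either way the crux is the same uniqueness statement, and the arbitrariness of the (possibly irrational) shift $h$, which moves $\mathbb{Q}^2$ off itself, is exactly what forces one to argue through distributions rather than through a measure-preserving bijection of skeletons.
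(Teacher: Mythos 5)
Your proposal is correct and follows essentially the same route as the paper: reduce $\mu\circ\theta_h^{-1}=\mu$ to equality of finite-dimensional distributions of $\psi$ and its time-shift, and obtain that from the verification that $\psi$ satisfies \textbf{SF3} (with $\mathcal{F}^\psi_s$-measurable starting points) together with the uniqueness of finite-dimensional distributions established in the appendix. The ``main obstacle'' you single out --- propagating the Markov identity from the skeleton to arbitrary starting points by approximating $(s,x)$ from above by skeleton points and passing to the limit via \textbf{SP4}, \textbf{SP5}, the Feller property, consistency and absence of atoms --- is precisely the content of the paper's proof of this lemma.
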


\begin{proof} The measure $\mu$ is the distribution in $\mathbb{F}$ of the stochastic flow 
	$$
	\psi(s,x;t)=\inf\{Y_{(p_i,u_i)}(t):p_i\leq s, Y_{(p_i,u_i)}(s)\geq x\}.
	$$
	Then $\mu\circ \theta^{-1}_h$ is the distribution of the stochastic flow $\psi_h(s,x;t)=\psi(s+h,x;t+h).$ 	It is enough to check coincidence of finite dimensional distributions of these two flows. In turn it will follow from condition {\bf SF3} of the definition \ref{def29_1}. Observe that the past $\sigma-$field 
	$$
	\mathcal{F}^\psi_s=\sigma(\{\psi(p,x;r):p\leq r\leq s, x\in\mathbb{R}\})
	$$
	coincides with 
	$$
	\mathcal{F}^Y_s=\sigma(\{Y_{(p_i,u_i)}(r):i\geq 1, p_i\leq r\leq s\}).
	$$
	We will check that for any $\mathcal{F}^Y_s$-measurable random variables $\xi_1,\ldots,\xi_n$ and any $c_1,\ldots,c_n\in \mathbb{R}$
	$$
	\mathbb{P}\Big(\psi(s,\xi_1;t)<c_1,\ldots,\psi(s,\xi_n;t)<c_n|\mathcal{F}^Y_s\Big)=P^{(n)}_{t-s}\bigg((\xi_1,\ldots,\xi_n);\prod^n_{i=1}(-\infty,c_i)\bigg).
	$$
	To do it we will organize an $\mathcal{F}^Y_s$-measurable approximation procedure of starting points $\xi_1,\ldots,\xi_n.$
	
	For each partition $\Pi$ of the set $\{1,\ldots,n\}$ introduce an event $$
	A_{\Pi}=\bigcap^n_{i,j=1}\{\xi_i=\xi_j\Leftrightarrow i=j \mbox{ mod }\Pi\}.
	$$
	Following constructions are perfomed on the set $A_{\Pi}.$ Denote by $(\eta_1,\ldots,\eta_k)$ values of $(\xi_1,\ldots,\xi_n).$  Here $k=|\Pi|$ and $\eta_1<\ldots<\eta_k.$ Denote elements of the set $\{i\in\{1,\ldots,n\}:\xi_i=\eta_j\}$ by $\{i^j_1,\ldots,i^j_{d_j}\}.$  For each $m\geq 1$ consider 
	$$
	y^m_{j}=\min\{Y_{(p_i,u_i)}(s):1\leq i\leq m, p_i\leq s, Y_{(p_i,u_i)}(s)\geq \eta_j\},
	$$
	where the minimum of the empty set is defined as $\infty.$ Then for each $j\in \{1,\ldots,k\}$ one has $y^m_j\to \eta_j+$ and $\psi(s,y^m_j;t)\to \psi(s,\eta_j;t)+,$ $m\to\infty$ (here we also agree that $\psi(s,\infty;t)=\infty$). Indeed, there are two possibilities (see fig. 6). If a point $(s,\eta_j)$ lies on the skeleton, then at some instant we will have coincidence $y^m_j=\eta_j$ and, respectively, $\psi(s,y^m_j;t)=\psi(s,\eta_j;t).$ Otherwise, right-continuity gives the needed convergence.
	
	So, on the set $A_\Pi$ one has
	$$
	\begin{aligned}
	\mathbb{P}& (\psi(s,\xi_1;t)<c_1,\ldots,\psi(x,\xi_n;t)<c_n|\mathcal{F}^Y_s) \\
	=& \lim_{m\to\infty}\mathbb{P}(\psi(s,y^m_1;t)<c_{i^1_1}\wedge\ldots\wedge c_{i^1_{d_1}},\ldots,\psi(x,y^m_k;t)<c_{i^k_1}\wedge\ldots\wedge c_{i^k_{d_k}}|\mathcal{F}^X_s).
	\end{aligned}
	$$
	The latter probability is found from the conditional distribution of the vector $(Y_{(p_1,u_1)}(t),$ $\ldots,Y_{(p_m,u_m)}(t))$ with respect to $\mathcal{F}^Y_s.$ By lemma \ref{lem2} and consistency condition {\bf TP1} one can write the limit as 
	$$
	\begin{aligned}
	\lim_{m\to\infty} & P^{(k)}_{t-s}\bigg((y^m_1,\ldots,y^m_k);\prod^k_{j=1}(-\infty,c_{i^j_1}\wedge\ldots\wedge c_{i^j_{d_j}})\bigg) \\
	= & P^{(k)}_{t-s}\bigg((\eta_1,\ldots,\eta_k);\prod^k_{j=1}(-\infty,c_{i^j_1}\wedge\ldots\wedge c_{i^j_{d_j}})\bigg) \\
	= & P^{(k)}_{t-s}\bigg((\xi_1,\ldots,\xi_n);\prod^n_{i=1}(-\infty,c_i)\bigg).
	\end{aligned}
	$$
	On the last step we used continuity of a distribution of a Feller process in space variable, absence of atoms, consistency condition {\bf TP2} and coalescing condition {\bf TP3}. The lemma is proved.
\end{proof}

Results of this section show that the stochastic flow of mappings defined by transition probabilities satisfying properties {\bf TP1-TP6} gives rise to a random dynamical  system $\varphi$ over the space of flows $(\mathbb{F},\mathcal{A},\mu,\theta)$  in the sense of definition \ref{def12}.

\section{Examples}

\subsection{Coalescing diffusion processes}

In this section we construct a random dynamical system defined by a flow of coalescing diffusion processes, that are independent before the meeting time. At first we describe corresponding transition probabilities $\{P^{(n)}:n\geq 1\}.$ Consider a stochastic differential equation
\begin{equation}
\label{eq04_5}
dX(t)=a(X(t))dt+b(X(t))dw(t)
\end{equation}
where $w$ is a Wiener process and coefficients $a,b:\mathbb{R}\to\mathbb{R}$ are globally Lipschitz,  $b(x)>0$  for all $x\in\mathbb{R}.$

It is well-known \cite[Ch. V, Th. (11.2)]{RW2} that for every $x\in\mathbb{R}$ the equation \eqref{eq04_5} has a unique strong solution $\{X_x(t):t\geq 0\},$ $X_x(0)=x.$ Moreover, measures 
\begin{equation}
\label{eq04_6}
P^{(1)}_t(x,A)=\mathbb{P}(X_x(t)\in A)
\end{equation}
constitute a Feller semigroup of transition probabilities on $\mathbb{R}$ \cite[Ch. V, Th. (24.1)]{RW2}. Condition {\bf TP4} follows from the estimate $P^{(1)}_t(x,(x-\varepsilon,x+\varepsilon))\geq T^{(1)}_t h_\varepsilon(x),$ where $h_\varepsilon:\mathbb{R}\to[0,1]$ is an infinitely differentiable function that equals $1$ in a neighborhood of $x$ and has a support in $(x-\varepsilon,x+\varepsilon).$ Condition {\bf TP6} holds because measures $P^{(1)}_t(x,\cdot),$ $t>0,$ are absolutely continuous with respect to the Lebesgue measure \cite[Cor. 10.1.4]{SV}.

Denote by $P^{(n),ind.}$ a transition probability that corresponds to $n$ independent solutions of \eqref{eq04_5},
\begin{equation}
\label{eq04_7}
P^{(n),ind.}_t((x_1,\ldots,x_n),A_1\times \ldots \times A_n)=\prod^n_{i=1} P^{(1)}_t(x_i,A_i).
\end{equation}
Obviously, transition semigroups $\{P^{(n),ind.}:n\geq 1\}$ are Feller and consistent, i.e. conditions {\bf TP1, TP2} are satisfied. 

In the next lemma we show that condition {\bf TP5} holds for $P^{(2),ind.}.$ Further, from a general result of \cite{LJR} we obtain a sequence of (coalescing) transition semigroups $\{P^{(n)}:n\geq 1\}$ that satisfy conditions {\bf TP1-TP6} and are such that distributions of the stopped process  $X^{(n)}(\tau_n\wedge \cdot)$ (where $\tau_n=\inf\{t\geq 0: X^{(n)}_i(t)=X^{(n)}_j(t) \mbox{ for some } i\ne j\})$ under measures $\mathbb{P}^{(n)}_{(x_1,\ldots,x_n)}$ and $\mathbb{P}^{(n),ind.}_{(x_1,\ldots,x_n)}$ coincide. Transition semigroups$\{P^{(n)}:n\geq 1\}$ desribe the needed dynamics: any $n$-tuple of processes move like independent solutions to \eqref{eq04_5} and coalesce at the moment of meeting.

\begin{lemma}
	\label{lem5} For each $c<c'$ and $t>0$ there exists a continuous increasing function $m:\mathbb{R}\to\mathbb{R}$ such that for all $x,y$
	$$
	\begin{aligned}
	\mathbb{P}^{(2),ind.}_{x,y}(& \forall s\in[0,t] \ (X^{(2)}_1(s),X^{(2)}_2(s))\in[c,c']^2\\
	& \mbox{ and } X^{(2)}_1(s)\ne X^{(2)}_2(s))\leq |m(x)-m(y)|.
	\end{aligned}
	$$
\end{lemma}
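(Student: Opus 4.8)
The plan is to remove the drift with the scale function of \eqref{eq04_5} and then reduce the non-meeting probability to a one-dimensional Brownian estimate via a time change. Under $\mathbb{P}^{(2),ind.}_{x,y}$ let $X_1=X^{(2)}_1$ and $X_2=X^{(2)}_2$ be the two independent solutions of \eqref{eq04_5} started at $x$ and $y$ and driven by independent Wiener processes $w_1,w_2.$ Introduce the scale function
$$
S(x)=\int_0^x\exp\Big(-2\int_0^u\frac{a(v)}{b(v)^2}\,dv\Big)\,du,
$$
which is well defined because $a$ is continuous and $b$ is continuous and strictly positive; moreover $S'(x)>0$ for every $x$ and $S\in C^2$ with $\tfrac12 b^2S''+aS'=0.$ By It\^o's formula each $S(X_i)$ is a continuous local martingale, $dS(X_i)=S'(X_i)b(X_i)\,dw_i,$ the drift cancelling by the defining ODE. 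Since $S$ is strictly increasing, the set $\{X_1=X_2\}$ coincides with the zero set of
$$
W_s=S(X_1(s))-S(X_2(s)),
$$
a continuous local martingale started at $w_0=S(x)-S(y)$ with $\langle W\rangle_s=\int_0^s\big(S'(X_1)^2b(X_1)^2+S'(X_2)^2b(X_2)^2\big)\,dr.$

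First I would localize to the strip. Let $\sigma=\inf\{s:(X_1(s),X_2(s))\notin[c,c']^2\}.$ Since $S'$ and $b$ are continuous and positive on the compact $[c,c'],$ the integrand of $\langle W\rangle$ is bounded below on $[c,c']^2$ by $\gamma:=2\min_{z\in[c,c']}(S'(z)b(z))^2>0,$ and on $[0,\sigma]$ the martingale $W$ stays in the bounded interval $[S(c)-S(c'),S(c')-S(c)].$ Denote by $E$ the event in the statement: for all $s\in[0,t]$ the pair $(X_1(s),X_2(s))$ lies in $[c,c']^2$ and $X_1(s)\ne X_2(s).$ On $E$ one has $\sigma\ge t$ and $\langle W\rangle_t\ge\gamma t,$ while $W$ keeps a constant sign on $[0,t]$ because it is continuous and does not vanish there. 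Assume $x>y,$ so $w_0>0$ and $W_s>0$ for $s\le t$; the case $x<y$ is symmetric, and $x=y$ gives $\mathbb{P}(E)=0=|m(x)-m(y)|.$

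Next I would apply the Dambis--Dubins--Schwarz theorem to the stopped martingale $W^\sigma-w_0$: on a possibly enlarged probability space $W_{s\wedge\sigma}=\beta_{\langle W\rangle_{s\wedge\sigma}}$ for a Brownian motion $\beta$ started at $w_0$ (an independent Brownian increment is appended beyond $\langle W\rangle_\sigma$ if that value is finite). On $E$ we have $s\wedge\sigma=s$ for $s\le t,$ so $W_s=\beta_{\langle W\rangle_s};$ as $s$ runs through $[0,t]$ the continuous increasing clock $\langle W\rangle_s$ sweeps out all of $[0,\langle W\rangle_t],$ whence $\beta_u>0$ for every $u\in[0,\langle W\rangle_t]\supseteq[0,\gamma t].$ Therefore $E\subseteq\{\beta_u>0\text{ for all }u\in[0,\gamma t]\},$ and by the reflection principle for a Brownian motion started at $w_0>0$,
$$
\mathbb{P}\big(\beta_u>0,\ u\in[0,\gamma t]\big)=\mathbb{P}\Big(|N|\le \tfrac{w_0}{\sqrt{\gamma t}}\Big)\le\frac{2w_0}{\sqrt{2\pi\gamma t}},
$$
with $N$ a standard normal variable. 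Combining the two cases yields $\mathbb{P}^{(2),ind.}_{x,y}(E)\le\frac{2}{\sqrt{2\pi\gamma t}}\,|S(x)-S(y)|,$ so $m:=\frac{2}{\sqrt{2\pi\gamma t}}\,S$ is continuous, increasing, and satisfies the required inequality.

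The verifications that $S$ is $C^2,$ that the drift cancels in It\^o's formula, and the elementary Gaussian bound are routine. The step that needs the most care is the time-change bookkeeping: stopping at the exit time $\sigma$ to keep $W^\sigma$ controlled and to invoke Dambis--Dubins--Schwarz cleanly, and then correctly converting the strip lower bound $\langle W\rangle_t\ge\gamma t$ into a lower bound $\gamma t$ on the elapsed Brownian clock, so that the linear-in-$w_0$ survival estimate applies and produces a bound of the form $|m(x)-m(y)|.$
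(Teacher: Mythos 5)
Your proof is correct and follows essentially the same route as the paper's: remove the drift with the scale function, observe that the (scaled) difference is a continuous local martingale whose quadratic variation grows at least linearly while the pair stays in $[c,c']^2$, apply Dambis--Dubins--Schwarz, and finish with the one-sided Gaussian exit estimate, yielding $m$ as a constant multiple of the scale function. The only cosmetic difference is that you localize by stopping at the exit time of the strip, whereas the paper first treats $a=0$ by replacing $b$ off $[c,c']$ with a coefficient bounded below by $\delta$ and then removes the drift by the Zvonkin transformation; both devices serve the same purpose of guaranteeing the lower bound on the time-change clock.
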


\begin{proof} Assume that $c\leq x\leq y\leq c'.$ At first we consider the case $a=0.$ Denote by $\tilde{b}$ a bounded Lipschitz function such that $\tilde{b}(x)\geq \delta$ for some $\delta>0$ and all $x\in\mathbb{R},$ and $b(x)=\tilde{b}(x)$ for $x\in[c,c'].$ Then we can estimate
	$$
	\begin{aligned}
	\mathbb{P}^{(2),ind.}_{x,y}(& \forall s\in[0,t] \ (X^{(2)}_1(s),X^{(2)}_2(s))\in[c,c']^2\\
	& \mbox{ and } X^{(2)}_1(s)\ne X^{(2)}_2(s)) \leq \mathbb{P}(\forall s\in[0,t] \  X(s)\ne Y(s)),	
	\end{aligned}
	$$
	where 
	$$
	\begin{cases}
	dX(t)=\tilde{b}(X(t))dw_1(t) \\
	X(0)=x
	\end{cases} \  	\begin{cases}
	dY(t)=\tilde{b}(Y(t))dw_2(t) \\
	Y(0)=y
	\end{cases} 
	$$
	and $w_1$ and $w_2$ are independent Wiener processes \cite[Ch. V, Cor. (11.10)]{RW2}. The difference $D(t)=Y(t)-X(t)$ is a continuous martingale with quadratic variation 
	$$
	\langle D\rangle(t)=\int^t_0 (\tilde{b}(X(s))^2+\tilde{b}(Y(s))^2)ds\geq 2\delta^2 t
	$$
	By \cite[Ch. V, Th. (1.6)]{RY} $D$ can be written as $D(t)=y-x+B(\langle D\rangle(t))$ with $B$ being a Brownian motion. So,
	$$
	\begin{aligned}
	\mathbb{P} &( \forall s\in[0,t] \  X(s)\ne Y(s))=\mathbb{P}(\forall s\in[0,t]\  D(t)>0) \\
	= & \mathbb{P}(\forall s\in[0,\langle D\rangle(t)]\  B(s)>x-y) \\
	\leq & \mathbb{P}(\forall s\in[0,2\delta^2 t]\  B(s)>x-y)=\sqrt{\frac{2}{\pi}}\int^{\frac{y-x}{\sqrt{2t}\delta}}_0e^{-z^2/2}dz\leq \frac{y-x}{\sqrt{\pi t}\delta}.
	\end{aligned}
	$$
	For a deep and unified study of similar inequalities for more general stochastic flows we refer to \cite{D_ent}.
	
	The general case follows via the transformation
	$$
	m(x)=\int^x_0\exp\bigg(-2\int^y_0\frac{a(z)}{b^2(z)}dz\bigg)dy
	$$
	that removes the drift \cite{Zvonkin}.

\end{proof}

\begin{corollary}\label{cor29_01} There exists a unique sequence $\{P^{(n)}:n\geq 1\}$ of transition probabilities that satisfy conditions {\bf TP1-TP6} and are such that distributions of the stopped process   $X^{(n)}(\tau_n\wedge \cdot)$ (where $\tau_n=\inf\{t\geq 0: X^{(n)}_i(t)=X^{(n)}_j(t) \mbox{ for some } i\ne j\})$ under measures $\mathbb{P}^{(n)}_{(x_1,\ldots,x_n)}$ and $\mathbb{P}^{(n),ind.}_{(x_1,\ldots,x_n)}$ (given by \eqref{eq04_5},\eqref{eq04_6},\eqref{eq04_7}) coincide. 
\end{corollary}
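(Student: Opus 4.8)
The plan is to obtain the coalescing family $\{P^{(n)}\}$ from the independent family $\{P^{(n),ind.}\}$ by forcing any two particles to stick together once they meet, and then to verify conditions \textbf{TP1--TP6} in turn. We have already checked \textbf{TP1, TP2} for $\{P^{(n),ind.}\}$, and Lemma \ref{lem5} supplies the meeting-time estimate for the independent two-point motion, so the existence of a coalescing version is provided by the general construction of \cite{LJR}. Concretely, one builds $\mathbb{P}^{(n)}_{(x_1,\ldots,x_n)}$ by induction on $n$: run the independent motion $X^{(n),ind.}$ up to the first meeting time $\tau_n$, so that $X^{(n)}(\tau_n\wedge\cdot)$ has, by definition, the law of the stopped independent motion; then at $\tau_n$ identify the coordinates that have just coincided and, by the strong Markov property, restart the evolution of the resulting configuration of at most $n-1$ distinct points according to the already-constructed coalescing motion of lower dimension.

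The stopped-process property gives \textbf{TP3} at once, since coordinates that meet stay equal thereafter, while \textbf{TP2} follows by induction from the consistency of $\{P^{(n),ind.}\}$ together with the fact that coalescence commutes with projection onto a subset of the coordinates. Conditions \textbf{TP4} and \textbf{TP6} involve only the one-point motion, which equals the law of the SDE solution and is untouched by coalescence; they therefore hold exactly as established above for $P^{(1)}$. For \textbf{TP5} observe that on the event appearing in its statement the two coordinates never meet on $[0,t]$, so the coalescing and independent two-point motions coincide on $[0,t]$; hence that event has the same probability under $\mathbb{P}^{(2)}_{(x,y)}$ as under $\mathbb{P}^{(2),ind.}_{(x,y)}$, and the bound $|m(x)-m(y)|$ of Lemma \ref{lem5} carries over verbatim.

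The substantive point is \textbf{TP1}, the Feller property of the coalescing semigroups. That each $T^{(n)}_t$ maps $C_0(\mathbb{R}^n)$ into itself and is strongly continuous in $t$ is inherited from the independent motion and the coalescence mechanism; the delicate part is continuity of $x\mapsto T^{(n)}_tf(x)$ across the diagonals $\{x_i=x_j\}$, where the coalescing dynamics departs from the independent one. This is precisely where the meeting-time estimate of Lemma \ref{lem5} enters: when two starting coordinates are close, with high probability they meet almost immediately, so the coalesced and independent $n$-point motions stay uniformly close on $[0,t]$, and the required continuity follows. I expect this diagonal continuity to be the main obstacle, all remaining conditions being either structural consequences of the construction or reductions to the one-point motion.

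Uniqueness is then established by induction on $n$. For $n=1$ there is no meeting time, so the stopped-process condition forces $P^{(1)}=P^{(1),ind.}$. For $n\ge 2$, the law of the $n$-point motion up to $\tau_n$ is fixed by the stopped-process condition; at $\tau_n$ at least one pair of coordinates coincides, leaving at most $n-1$ distinct points (with \textbf{TP6} ensuring the meeting is nondegenerate); and by the strong Markov property, the coalescing condition \textbf{TP3} and the consistency condition \textbf{TP2}, the evolution after $\tau_n$ is the coalescing motion of the reduced configuration, which is determined by the inductive hypothesis. Consequently $\{P^{(n)}\}$ is uniquely determined, completing the proof.
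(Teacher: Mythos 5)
Your overall strategy is sound, but it takes a genuinely different and much more laborious route than the paper. The paper's proof is essentially one line: it invokes \cite[Th.~4.1]{LJR}, which already packages the existence and uniqueness of a coalescing, consistent Feller family whose stopped $n$-point motions agree with a given independent (or more generally, non-coalescing) family, and reduces the whole corollary to verifying the single hypothesis of that theorem, namely that
$\mathbb{P}^{(2),ind.}_{x,y}\big(\forall s\in[0,t]\ X^{(2)}_1(s)\ne X^{(2)}_2(s)\big)\to 0$ as $y\to x$;
this follows from Lemma~\ref{lem5} (which controls the probability of not meeting while confined to $[c,c']^2$) together with continuity of trajectories (to remove the confinement by letting $c\to-\infty$, $c'\to\infty$). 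You instead re-derive the content of \cite[Th.~4.1]{LJR} by hand: an inductive stop-and-restart construction followed by a case-by-case verification of \textbf{TP1--TP6} and a separate uniqueness induction. Several of your individual reductions are correct and worth keeping --- in particular the observation that \textbf{TP4} and \textbf{TP6} concern only the one-point motion, which coalescence does not alter, and that the event in \textbf{TP5} is contained in $\{\tau_2>t\}$, on which the coalescing and independent two-point laws agree, so Lemma~\ref{lem5} transfers verbatim. However, the two steps you yourself identify as substantive are only asserted, not proved: (i) the Feller continuity of $x\mapsto T^{(n)}_tf(x)$ across the diagonals (your phrase that the ``coalesced and independent $n$-point motions stay uniformly close'' is not quite the right comparison --- what is needed is that coalescing motions from nearby initial configurations are close in law, which again rests on the fast-meeting estimate); and (ii) the claim that ``coalescence commutes with projection onto a subset of the coordinates,'' which is exactly the consistency \textbf{TP2} of the coalescing family and is not automatic from the construction, since the first meeting time $\tau_n$ of the full system may involve coordinates outside the chosen subset. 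These are precisely the points that the citation to \cite[Th.~4.1]{LJR} is there to supply; if you want a self-contained proof you must carry them out in detail, otherwise it is cleaner (and is what the paper does) to quote that theorem and check only its hypothesis.
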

\begin{proof} By  \cite[Th. 4.1]{LJR} it is enough to check that for all $t>0$ and $x\in\mathbb{R}$
	$$
	\mathbb{P}^{(2),ind.}_{x,y}(\forall s\in[0,t] \ X^{(2)}_1(s)\ne X^{(2)}_2(s))\to 0, \ y\to x.
	$$
	But the latter convergence follows from the property {\bf TP5} and the continuity of trajectories.

\end{proof}

As a conlcusion, one can build a random dynamical system $\varphi:\mathbb{R}_+\times\Omega\times \mathbb{R}\to \mathbb{R}$ over certain measurable dynamical system $(\Omega,\mathcal{F},\mathbb{P},\theta)$ such that for any starting points $x$ the process $t\to \varphi(t,x)$ is distributed as a solution to \eqref{eq04_5} and for any starting points $x_1<\ldots<x_n$ the process $t\to (\varphi(t,x_1),\ldots,\varphi(t,x_n))$ is a family of coalescing processes independent before the first meeting time. As partial cases we get following examples.

\begin{example}\label{ex29_1}
	If $a=0, b=1,$ then a random dynamical system corresponds to the Arratia flow.
\end{example}

\begin{example}\label{ex29_2}
	If $a(x)=-\lambda x, b=\sigma,$ then a random dynamical system corresponds to the flow of coalescing Ornstein-Uhlenbeck processes. Ergodic properties of such systems will be studied in our future work.
\end{example}

From a naive point of view described models can be viewed as flows of solutions to ``stochastic differential equation''
$$
\begin{cases}
dX(u,t)=a(X(u,t))dt+b(X(u,t))dw_{t,X(u,t)}(t) \\
X(u,0)=u
\end{cases}
$$ ``driven'' by the Arratia flow $\{w_{(s,x)}(t):s\leq t, x\in\mathbb{R}\}$. However, a problem of a rigorous definition of such equations remains open. For some related questions we refer to \cite{DVovch}.

\subsection{Coalescing Harris flows}

More examples of transition semigroups $\{P^{(n)}:n\geq 1\}$ that satisfy {\bf TP1-TP6} and give rise to random dynamical systems are provided by Harris flows.  Consider a positive definite function 
$$
\Gamma:\mathbb{R}\to\mathbb{R},
$$
that satisfy following conditions

\begin{enumerate}
	\item $\Gamma$ is a characteristic function of some probability measure $\mu$ of infinite support;

	\item there exists continuous function $\beta:(0,1]\to (0,\infty)$ such that 
	$$
	1-\Gamma(x)\geq \beta(x);
	$$
	
	\item for some $\varepsilon>0$
	$$
	\int^\varepsilon_0 \frac{xdx}{\beta(x)}<\infty;
	$$
	
	\item for some $\varepsilon>0,$ $\frac{\beta(x)}{x^2}$ is monotone decreasing on $(0,\varepsilon).$
	
\end{enumerate}

In \cite{Harris} it is proved that there exists a family $\{w_{x}(t):t\geq 0, x\in\mathbb{R}\}$ of Wiener process (with respect to the joint filtration), such that $w_x(0)=x$ and 
$$
d\langle w_x,w_y\rangle(t)=\Gamma(w_x(t)-w_y(t))dt.
$$
Such flow is a ``correlated'' analogue of the Arratia flow. Condition 3 implies that the coalescence is present in the flow. Related transition semigroups are defined by 
$$
P^{(n)}_t((x_1,\ldots,x_n),B)=\mathbb{P}((w_{x_1}(t),\ldots,w_{x_n}(t))\in B).
$$ 
They satisfy conditions {\bf TP1-TP6} as follows from \cite{Harris} and \cite[Th. 3.2]{Matsumoto}. Hence, random dynamical system can be built for such Harris flows.

\section{Appendix. On definitions of a stochastic flow}

Here we discuss differences in our definition \ref{def29_1} of the stochastic flow and more usual definition \cite[Def. 1.6]{LJR}. The difference is caused by the fact that we consider stochastic flows of discontinuous random mappings. Consequently, composition of such mappings may not possess good measurability properties. 

Consider a stochastic flow $\{\psi_{s,t}(x):-\infty<s\leq t<\infty,x\in \mathbb{R}\}$ in the sense of the definition \ref{def29_1}. In terms of the semigroup $T^{(n)}$ our condition {\bf SF3} states that for any $s\leq t,$ any $\mathcal{F}^\psi_{s}-$measurable $\mathbb{R}^n-$valued random  vector $\xi$ and any function $f\in C_0(\mathbb{R}^n)$ one has
\begin{equation}
\label{eq22_1}
\mathbb{E}(f(\psi_{s,t}(\xi))/\mathcal{F}^\psi_s)=T^{(n)}_{t-s}f(\xi).
\end{equation}
Together with {\bf SF1, SF2} it allows to recover all finite-dimensional distributions of the family $\{\psi_{s,t}(x):-\infty<s\leq t<\infty, x\in \mathbb{R}\}$ from transition probabilities $\{P^{(n)}:n\geq 1\}.$ Indeed, consider  points $t_1\leq t_2\leq \ldots\leq t_n,$ $x_1,\ldots,x_l\in \mathbb{R}$ and $l\frac{n(n-1)}{2}$ functions  $g^{(k)}_{i,j}\in C_0(\mathbb{R}),$ $1\leq k\leq l, 1\leq i<j\leq n.$ Next identities follow from conditions {\bf SF1, SF2, SF3} and \eqref{eq22_1} applied to $\mathcal{F}^\psi_{t_{n-1}}-$measurable random vector $\xi=(\psi_{t_i,t_{n-1}}(x_k))_{1\leq i\leq n-1, 1\leq k\leq l},$ and a function 
$$
f(x^{(1)}_1,\ldots,x^{(l)}_1,\ldots,x^{(1)}_{n-1},\ldots,x^{(l)}_{n-1})=\prod^{n-1}_{i=1}\prod^l_{k=1} g^{(k)}_{i,n}(x^{(k)}_i).
$$ 
One has
$$
\begin{aligned}
\mathbb{E} & \prod_{1\leq i<j\leq n}\prod^l_{k=1} g^{(k)}_{i,j}(\psi_{t_i,t_j}(x_k))\\
=\mathbb{E} & \prod_{1\leq i<j\leq n-1}\prod^l_{k=1} g^{(k)}_{i,j}(\psi_{t_i,t_j}(x_k)) \mathbb{E}\Big(f(\psi_{t_{n-1},t_n}(\xi))/\mathcal{F}^\psi_{t_{n-1}}\Big)\\
=\mathbb{E}& \Big(T^{(l(n-1))}_{t_n-t_{n-1}}f(\xi)\prod_{1\leq i<j\leq n-1}\prod^l_{k=1} g^{(k)}_{i,j}(\psi_{t_i,t_j}(x_k))\Big).
\end{aligned}
$$
Hence, distribution of $\{\psi_{t_i,t_j}(x_k):1\leq i<j\leq n, 1\leq k\leq l\}$ is expressed in terms of the distribution of $\{\psi_{t_i,t_j}(x_k):1\leq i<j\leq n-1, 1\leq k\leq l\}.$ Finally, by {\bf SF3} the distribution of $(\psi_{t_1,t_2}(x_1),\ldots,\psi_{t_1,t_2}(x_l))$ is given by $P^{(l)}_{t_2-t_1}((x_1,\ldots,x_l),\cdot).$

From the condition {\bf SF3} one can easily deduce two usual properties: independence and stationarity of increments:

{\bf SF3'}  the distribution of the element $(\psi_{s,t}(x_1),\ldots,\psi_{s,t}(x_n))$ is $P^{(n)}_{t-s}((x_1,\ldots,x_n),\ldots);$

{\bf SF3''} for any $t_0\leq t_1\leq \ldots\leq t_n$ $\sigma-$fields $\sigma(\{\psi_{t_{i-1},t_i}(x):x\in \mathbb{R}\}),$ $1\leq i\leq n$ are independent.

In fact, in \cite{LJR} these two properties are intended instead of {\bf SF3}.

We do not know if it is possible to recover finite-dimensional distributions of the stochastic flow $\{\psi_{s,t}(x):-\infty<s\leq t<\infty, x\in \mathbb{R}\}$ from properties {\bf SF1, SF2, SF3', SF3''} (as it is stated in \cite[Rem. 1.4]{LJR}). At least, the natural approach we used above is inapplicable because conditions {\bf SF3', SF3''} are not enough to use the Fubini theorem for the mapping 
$$
C:M\times M\to M, \ C(f,g)=f\circ g,
$$
where $M$ is the space of all measurable function with cylindrical $\sigma-$field. On the contrary, the condition {\bf SF3} postulates the possibility to use the Fubini theorem. Analogous definition of the distribution of a stochastic flow was proposed in \cite{Darling}.

In view of this discussion it is natural to ask if one can construct  two stochastic flows $\{\psi_{s,t}(x): -\infty<s\leq <\infty, x\in \mathbb{R}\}$  and 
$\{\tilde{\psi}_{s,t}(x): -\infty<s\leq <\infty, x\in \mathbb{R}\}$ 
that statisfy conditions {\bf SF1, SF2, SF3', SF3''}, have same transition probabilities $\{P^{(n)}:n\geq 1\}$ but different finite-dimensional distributions. 

Below we give such example in discrete time -- there exist two stochastic flows (on $[0,1]$) 
$$
\{\psi_{s,t}(x): s,t\in\{0,2\}, s\leq t,x\in [0,1]\} \mbox{ and } \{\tilde{\psi}_{s,t}(x): s,t\in\{0,2\}, s\leq t,x\in [0,1]\}
$$ 
that statisfy conditions {\bf SF1, SF2, SF3', SF3''}, have the same transition probabilities $\{P^{(n)}:n\geq 1\}$ but different finite-dimensional distributions.

Consider a probability space $\Omega=[0,1]^2$ equipped with a Borel $\sigma-$filed and a Lebesgue measure $\mathbb{P}.$ Consider following random mappings of $[0,1]:$
$$
\psi_{0,1}(\omega_1,\omega_2;x)=\tilde{\psi}_{0,1}(\omega_1,\omega_2;x)=\omega_1,
$$
$$
\psi_{1,2}(\omega_1,\omega_2;x)=\begin{cases}
\omega_2, \ x\ne \omega_1 \\
\omega_1, \ x=\omega_1
\end{cases},\ 
\tilde{\psi}_{1,2}(\omega_1,\omega_2;x)=\omega_2,
$$
$$
\tilde{\psi}_{s,s}(x)=\psi_{s,s}(x)=x, \ s\in\{0,1,2\},
$$
and their compositions
$$
\psi_{0,2}(\omega_1,\omega_2;x)=\psi_{1,2}(\omega_1,\omega_2;\psi_{0,1}(\omega_1,\omega_2;x))=\omega_1,
$$
$$
\tilde{\psi}_{0,2}(\omega_1,\omega_2;x)=\tilde{\psi}_{1,2}(\omega_1,\omega_2;\tilde{\psi}_{0,1}(\omega_1,\omega_2;x))=\omega_2.
$$
Realizations of these two flows are illustrated in figure 7.

\begin{figure}[!h]
	\centering
	\caption{Trajectories of flows $\psi$ and $\tilde{\psi}.$}
	\includegraphics[width=8cm]{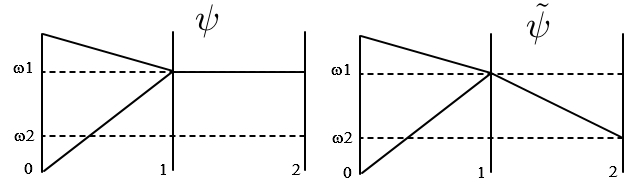}
\end{figure}

Obviously, both families are measurable strong flows.  $\sigma$-fields generated by $\tilde{\psi}_{0,1}=\psi_{0,1}$ and $\tilde{\psi}_{1,2}$ are $\{B\times[0,1], B\in \mathcal{B}([0,1])\}$ and $\{[0,1]\times B, B\in \mathcal{B}([0,1])\}$ respectively. Hence they are independent. Further, each set from  $\sigma(\psi_{1,2})$ differs from the set $[0,1]\times B$ by the set of a Lebesgue measure zero. Hence $\sigma(\psi_{0,1})$ and $\sigma(\psi_{1,2})$ are independent also. Finite-dimensional distributions of all mappings $\psi_{0,1},$ $\psi_{1,2},$ $\psi_{0,2},$ $\tilde{\psi}_{0,1},$ $\tilde{\psi}_{1,2},$ $\tilde{\psi}_{0,2},$  are the same and given by 
$$
\mathbb{P}(\psi(x_1)=\ldots=\psi(x_n)\in (a,b))=b-a, \ 0\leq a\leq b\leq 1,
$$
i.e. each mapping sends all segment $[0,1]$ into one (uniformly distributed) random point. 	So, conditions {\bf SF1,SF2,SF3',SF3''} are satisfied for both flows. However, flows are different (see figure 7). Respectively, finite-dimensional distributions of these flows are different: 
$\tilde{\psi}_{0,1}$ and $\tilde{\psi}_{0,2}$ are independent while $\psi_{0,1}=\psi_{0,2}.$

\end{document}